\newcounter{a}\setcounter{a}{0}
\newenvironment{cond}{\par \refstepcounter{a} 
 {\upshape \textbf{Cond~\thea}:}~}{\par}
\def \C {\textbf{Cond}}
\newcommand{\app}[5]{\begin{array}{rccl} #1:&#2&\longrightarrow&#3\\ &#4&\longmapsto&#5\end{array}}  
\newcommand{\indic}[1]{\mathbf{1}_{#1}}
\newcommand{\reals}{\mathbb{R}}
\newcommand{\card}[1]{|#1|}
\newcommand{\comp}[1]{#1^c}
\newcommand{\cyl}[1]{\mathbb{Z} / #1 \mathbb{Z}}
\newcommand{\module}[1]{\left| #1 \right|}
\newcommand{\RN}{Radon-Nikodym}
\theoremstyle{plain}
\newtheorem{theoreme}{Theorem}[section]
\newtheorem{lemme}[theoreme]{Lemma}
\newtheorem{proposition}[theoreme]{Proposition}
\newtheorem{remarque}[theoreme]{Remark}
\theoremstyle{definition}
\newtheorem{definition}[theoreme]{Definition}
\newtheorem{exemple}[theoreme]{Example}
\newtheorem*{notation}{Notation}
\renewcommand{\d}{\mathrm{d}} 
\newcommand{\prob}[1]{P \left( #1 \right)}
\newcommand{\borel}[1]{\mathcal{B}(#1)}
\newcommand{\HZ}{\text{HZ}}
\newcommand{\CHZ}{\text{CHZ}}
\newcommand{\defi}[1]{\emph{#1}}
\newcommand{\truc}[1]{\overline{#1}}
\newcommand{\PCA}[1]{\mathbf{#1}}
\title{\Large \bf Probabilistic cellular automata with general alphabets letting a Markov chain invariant.}
\author{\bf Jérôme Casse \\ Univ. Bordeaux, LaBRI \\ UMR 5800\\ F-33400 Talence, France}
\date{}
\begin{document}
\maketitle
\begin{abstract}
This paper is devoted to probabilistic cellular automata (PCA) on $\mathbb{N}$, $\mathbb{Z}$ or $\mathbb{Z}/n\mathbb{Z}$, depending of two neighbors, with a general alphabet $E$ (finite or infinite, discrete or not). We study the following question: under which conditions does a PCA possess a Markov chain as invariant distribution? Previous results in the literature give some conditions on the transition matrix (for positive rate PCA) when the alphabet $E$ is finite. Here we obtain conditions on the transition kernel of PCA with a general alphabet $E$. In particular, we show that the existence of an invariant Markov chain is equivalent to the existence of a solution to a cubic integral equation.\par
One of the difficulties to pass from a finite alphabet to a general alphabet comes from some problems of measurability, and a large part of this work is devoted to clarify these issues.
\end{abstract}
    
\section{Introduction}
\subsubsection*{CA and PCA with finite alphabet}
\defi{Cellular automata} (CA), as described by Hedlund~\cite{Hedlund69}, are discrete local dynamical systems on a space $E^{\mathbb{L}}$ where $E = \{0,\dots,\kappa\}$ is a finite alphabet, the set of states of cells, and $\mathbb{L}$ is a discrete lattice. Formally, a cellular automaton $A$ is a tuple $(\mathbb{L},E,N,f)$ where
\begin{itemize}
  \setlength{\itemsep}{1pt}
  \setlength{\parskip}{0pt}
  \setlength{\parsep}{0pt}
\item $\mathbb{L}$ is a lattice, called set of cells. In this paper, $\mathbb{L}$ is $\mathbb{N}$, $\mathbb{Z}$ or $\cyl{n}$.
\item $N$ is the neighborhood function: for $i \in \mathbb{L}$, $N(i) = (i+l:~l\in L)$ where $L \subset \mathbb{L}$ is finite. Each neighborhood has cardinality $\card{N} = \card{L}$. In the paper, $N(i) = (i,i+1)$ when the lattice is $\mathbb{N}$ or $\mathbb{Z}$ and $N(i) = (i,i+1 \mod n)$ when the lattice is $\cyl{n}$.
\item $f$ is the local rule. It is a function $f : E^{\card{N}} \to E$. 
\end{itemize}\par

The CA $\PCA{A} = (\mathbb{L},E,N,f)$ defines a global function $F : E^{\mathbb{L}} \mapsto E^{\mathbb{L}}$ on the set of configurations $E^{\mathbb{L}}$. For any configuration $S_0 = (S_0(i):~i \in \mathbb{L})$, the image $S_1 = F(S_0) $ of $S_0$ by $F$ is defined by, for any $j \in \mathbb{L}$, 
\begin{displaymath}
S_1(j) = f \big( \left( S_0(i):~i \in N(j) \right) \big).
\end{displaymath}
In words, the state of all cells are updated simultaneously and the state $S_1(j)$ of the cell $j$ at time $1$ depends only of the states $\left(S_0\left(i\right):~i \in N(j)\right)$ of its neighborhood at time $0$. Hence, the dynamic is the following: starting from an initial configuration $S_{t_0} \in E^{\mathbb{L}}$ at time $t_0$, the successive states of the system are $\left(S_t:~t \geq t_0\right)$ where $S_{t+1} = F\left(S_t\right)$. The sequence of configurations 
\begin{displaymath}
S = (S_t = (S_t(i):~i\in \mathbb{L}),~t \geq t_0)
\end{displaymath}
is called the space-time diagram of $\PCA{A}$.\par

\begin{notation}
In the following, the state $S_t(i)$ of the cell $i$ at time $t$ will be denoted $S(i,t)$.
\end{notation}

\defi{Probabilistic cellular automata} (PCA) with finite alphabets are generalizations of CA in which the states  $\left(S(i,t):~i \in \mathbb{L},~t \geq t_0\right)$ are random variables (r.v.) defined on a common probability space $(\Omega,\mathcal{A},\mathbb{P})$, each of the r.v. $S(i,t)$ taking a.s. its value in $E$. Seen as random process, $S$ is equipped with the $\sigma$-field generated by the cylinders. To define PCA, the local rule $f$ is replaced by a transition matrix $T$ (of size $E^{\card{N}} \times E$) which gives the distributions of the state of a cell at time $t+1$ conditionally on those of its neighborhood at time $t$:
\begin{displaymath}
\prob{S(j,t+1) = b~|~\left(S(i,t) = a_i:~i \in N(j)\right)} = T\left(\left(a_i:~i \in N(j)\right);b\right).
\end{displaymath}
Conditionally on $S_t$, the states $(S(j,t+1):~j \in \mathbb{L})$ are independent (see Eq~(\ref{eq:ind})).\par

The transition matrix $T$ is then an array of non negative numbers satisfying, for any $\left(a_1,\dots,a_{\card{N}}\right) \in E^{\card{N}}$, $\displaystyle \sum_{b \in E} T\left(\left(a_1,\dots,a_{\card{N}}\right);b\right) = 1$.

Formally, a PCA $\PCA{A}$ with a finite alphabet $E$ is an operator $\mathcal{F} : \mathcal{M}\left(E^{\mathbb{L}}\right) \mapsto \mathcal{M}\left(E^{\mathbb{L}}\right)$  on the set of probability distributions $\mathcal{M}\left(E^{\mathbb{L}} \right)$ on the set of configurations. If $S_0$ has distribution $\mu_0$, then $S_1$ has distribution $\mu_1 = \mathcal{F}\left(\mu_0\right)$. We can, also, define $\mu_1$ directly from $\mu_0$ and $T$, using Kolmogorov extension theorem ($\mu_1$ is characterized by its finite-dimensional distributions), by: for any finite subset $C \subset \mathbb{L}$ and for any $\left(b_j:~j \in C \right) \in E^C$, 
\begin{equation} \label{eq:ind}
\mu_1 \big( \left(b_j:~j \in C\right) \big) = \sum_{\left(a_i\right)_{i \in N(C)} \in E^{N(C)}} \mu_0 \big( \left(a_i:~i \in N(C)\right) \big) \prod_{j \in C} T \left( \left(a_i:~i \in N(j)\right);b_j \right)
\end{equation}
where $\displaystyle N(C) = \bigcup_{j \in C} N(j)$. A measure $\mu \in \mathcal{M} \left( E^{\mathbb{L}} \right)$ is said to be \emph{invariant} by $\PCA{A}$ if $\mathcal{F}(\mu) = \mu$.\par

The simplest case of PCA is the two colors case $E=\{0,1\}$ on $\mathbb{Z}$ with neighborhood $N(i)=(i,i+1)$. They have been deeply studied and lots of results about them are known, see Toom~\cite{Toom90}. For example, Belyaev~\cite{Bel69} characterized the set of PCA possessing as invariant distribution a Markov chain indexed by $\mathbb{Z}$. Nevertheless, there are still interesting open problems about them: for instance, the question whether all positive rate PCA (i.e., for any $a,b,c \in \{0,1\}$, $T(a,b;c)>0$) are ergodic or not is still open.\par

So far, it has been observed in different frameworks that explicit calculus of the invariant distribution of PCA can be done only if the transition matrix satisfies some algebraic equations (that forms a manifold in terms of the $\left(T(a,b;c):~a,b,c \in E\right)$). In Belyaev~\cite{Bel69} this is shown for PCA with 2-letter alphabet whose invariant distributions are Markov chains or product measures. In Dai-Pra~\cite{DaiPra02}, this is done for PCA on $\mathbb{Z}^d$ with a 2-letter alphabet and whose invariant distributions are Gibbs measures. And, in Casse and Marckert~\cite{CasMar14}, the same phenomenon is observed for PCA on $\mathbb{Z}$ or $\cyl{n}$ with a finite alphabet letting a Markov chain invariant. Hence, literature focuses on characterizing PCA having simple invariant measures: product measures and Markov chains for $\card{N}=2$ and Gibbs measures for PCA on $\mathbb{Z}^d$. In addition to~\cite{Bel69}, the study of PCA on $\mathbb{Z}$ admitting an invariant product measure have been done by Mairesse and Marcovici~\cite{MaiMar12} (in a finite alphabet case). For PCA letting a Markov chain invariant, in addition to~\cite{Bel69} and~\cite{CasMar14}, Bousquet-Mélou~\cite{MBM98} characterizes those on $\cyl{n}$ with a 2-letter alphabet and Toom~\cite{Toom90} gives a sufficient condition for PCA on $\mathbb{Z}$ with a finite alphabet.\par

The most general results are given in~\cite{CasMar14} where it is proved (in Theorem~2.6) that a positive PCA on $\mathbb{Z}$ with two neighbors and a finite alphabet $E=\{0,\dots,\kappa\}$ admits an horizontal zigzag Markov chain (see Definition~\ref{def:HZMC}) as invariant distribution if and only if the two following conditions are satisfied:
\begin{enumerate}
  \setlength{\itemsep}{1pt}
  \setlength{\parskip}{0pt}
  \setlength{\parsep}{0pt}
\item for any $a,b,c \in E$, 
\begin{displaymath}
T(a,b;c) T(a,0;0) T(0,b;0) T(0,0;c) = T(a,b;0) T(a,0;c) T(0,b;c) T(0,0;0) \text{ and }
\end{displaymath}
\item $D^\gamma U^\gamma = U^\gamma D^\gamma$
\end{enumerate}
where $\displaystyle D^\gamma(a;c) = \frac{\displaystyle \sum_{k\in E} \frac{\gamma(k)}{T(a,k;0)} T(a,k;c)}{\displaystyle \sum_{k \in E} \frac{\gamma(k)}{T(a,k;0)}}$ and $\displaystyle U^\gamma(c;b) = \frac{\displaystyle \frac{\gamma(b)}{T(0,b;0)} T(0,b;c)}{\displaystyle \sum_{k\in E} \frac{\gamma(k)}{T(0,k;0)} T(0,k;c)}$ for any $a,b,c \in E$ where $\gamma$ is an eigenvector of an explicit matrix that depends only of $T$. 
This theorem is an extension of Theorem~3 of~\cite{Bel69} valid only for $2$ letters alphabet.\par

Inspired by this recent work, we investigate in this paper the case where the alphabet $E$ is general (finite or infinite, discrete or not). As we have to define probability distributions on $E$, as usual in probability theory, we will assume that $E$ is a Polish space (a separable complete metrizable space) equipped with its Borel set $\borel{E}$. It could be finite or infinite and discrete or not. In the following, when we write ``general alphabet'', we are thinking about a Polish space alphabet.\par

 Let us, first, define formally a PCA with a general alphabet.

\subsubsection*{PCA with general alphabet}
For PCA with general alphabets, transition matrices are replaced by transition kernels: let $F$ and $G$ be two Polish spaces, $K=\left( K(x;Y):~x \in F,~Y \in \borel{G} \right)$ is a \defi{transition kernel} (t.k.) from $F$ to $G$ if
\begin{enumerate}
  \setlength{\itemsep}{1pt}
  \setlength{\parskip}{0pt}
  \setlength{\parsep}{0pt}
\item $x \mapsto K(x;Y)$ is $\borel{F}$-measurable for all $Y \in \borel{G}$,
\item $Y \mapsto K(x;Y)$ is a probability measure on $(G,\borel{G})$ for all $x \in F$.
\end{enumerate}

\begin{definition}[Probabilistic cellular automata with a general alphabet]
Let $E$ be a Polish space, $\mathbb{L}$ a lattice, $N$ a neighborhood function and $T$ a t.k. from $E^{\card{N}}$ to $E$. The PCA $\PCA{A} = (E,\mathbb{L},N,T)$ is the dynamical system on $E^\mathbb{L}$ such that, for all $k \in \mathbb{N}$, for all $j_1,\dots,j_k \in \mathbb{L}$, $C_1,\dots,C_k \in \borel{E}$, $t \in \mathbb{N}$,
\begin{displaymath}
\prob{S(j_1,t+1) \in C_1, \dots, S(j_k,t+1) \in C_k~|~\left(S(i,t):~i \in \mathbb{L}\right)} = \prod_{l=1}^k T\left( \left(S(i,t):~i \in N(j_l) \right) ; C_l\right).
\end{displaymath}
\end{definition}

If $E$ is finite, this definition is similar to the classical definition of PCA. But, now, the alphabet $E$ can be non-discrete and the t.k. can contain a non-atomic part.

\begin{exemple}[Gaussian PCA] \label{ex:GaussianPCA}
We define a family of PCA $(\PCA{G}_{m,\sigma})$ on $\mathbb{N}$ with alphabet $\reals$ and neighborhood $N(i)=(i,i+1)$ depending on two positive parameters $m$ and $\sigma$. The t.k. of $\PCA{G}_{m,\sigma}$ is the following: for all $a,b \in \reals$ and Borel set $C \in \borel{\reals}$,
\begin{displaymath}
T(a,b;C) = \prob{\mathcal{N} \left(\frac{a+b}{m},\sigma^2 \right) \in C}
\end{displaymath}
where $\displaystyle \mathcal{N}\left(\frac{a+b}{m},\sigma^2 \right)$ is the Gaussian random variable with mean $\displaystyle \frac{a+b}{m}$ and variance $\sigma^2$. In Section~\ref{sec:GaussianPCA}, we prove that an invariant measure of this PCA is related to autoregressive processes of order $1$ (AR(1)~processes).\par 
\end{exemple}

PCA with infinite and non-discrete alphabets exist in the literature, even if they are not studied as such to the best knowledge of the author. For example, in Section~\ref{sec:PCAlit}, we will see that the synchronous TASEP on $\reals$ defined by Blank~\cite{Blank12} (it is a discrete time, synchronous, space continuous version of the TASEP studied by Derrida \& al.~\cite{Derrida92}) could be modeled by a PCA on $\mathbb{Z}$ with alphabet $E=\reals$ and neighborhood $N(i) = (i,i+1)$.\par

The \emph{aim of the paper} is to shed some light on the structure of the set of PCA with a general alphabet (finite or infinite, discrete or not) having a Markovian invariant distribution on lattices $\mathbb{N}$, $\mathbb{Z}$ or $\cyl{n}$. In this case, some important complications arise due to measurability issues (compared with the finite case).\par

In continuous probability, it is classical that two distributions having a density are equal, if these densities are equal almost everywhere for the Lebesgue measure. This fact holds in a more general context: if $\mu$ is a $\sigma$-finite measure and $\nu_1$ and $\nu_2$ two measures absolutely continuous with respect to $\mu$, they are equal if their \RN-derivatives with respect to $\mu$ are equal $\mu$-almost everywhere. But, for Markov chains with general alphabets, the equality $\mu$-almost everywhere for transition kernel (i.e. for $\mu$-almost $x$, $M(x;.)=M'(x;.)$) is not a sufficient condition to have equality in distribution for the Markov chains. Indeed, it also depends of the initial probability distribution $\rho$ of these Markov chains. If $\rho$ charges a $\mu$-negligible set on which $M(x;.) \neq M'(x;.)$, then the two Markov chains $\left(\rho,M\right)$ and $\left(\rho,M'\right)$ can be not equal in distribution, and even, live on two different sets.\par

For PCA with any general alphabet, the same complications arise: a unique PCA can have some ``plural behaviors''. Hence, in this paper, each time a PCA $\PCA{A}$ is studied, a $\sigma$-finite measure $\mu$ is specified and, formally, it is on the pair $(\PCA{A},\mu)$ that the conditions and/or results hold.\par

\begin{exemple}[Gaussian PCA except on the diagonal] \label{ex:GaussianPCAd}
We define a family of PCA $\left( \PCA{\tilde{G}}_{m,\sigma} \right)$ on $\mathbb{N}$ with alphabet $\reals$ depending on two positives parameters $m$ and $\sigma$. The t.k. $\tilde{T}$ of $\PCA{\tilde{G}}_{m,\sigma}$ is the same as for $\PCA{G}_{m,\sigma}$ (defined in Example~\ref{ex:GaussianPCA}) except when $a=b$, in this case, for any $C \in \borel{\reals}$, $\tilde{T}(a,a;C) = \delta_a(C)$ where $\delta_a$ is the Dirac measure in $a$.\par

The PCA $\PCA{\tilde{G}}_{m,\sigma}$ will have the same behavior as the Gaussian PCA $\PCA{G}_{m,\sigma}$ if the initial state $S_{t_0}$ does not contain two consecutive cells in the same state, i.e. for any $i$, $S(i,t_0) \neq S(i+1,t_0)$. But, if its initial state is $0^{\mathbb{N}}$, then it will stay in this configuration until the end.
\end{exemple}

Before introducing the set of studied PCA in this article, let define some crucial notion used all along the paper: $\mu$-supported and $\mu$-positive transition kernels.
\begin{notation}
If $\mu$ is a measure on $E$ and $d \in \mathbb{N}$, then $\mu^d$ denotes the product measure of $d$ copies of the measures $\mu$ on $E^d$.
\end{notation}
\begin{definition}[$\mu$-supported and $\mu$-positive transition kernels]
Let $E$ be a Polish space, $\mu$ a $\sigma$-finite measure on $E$ and $d \in \mathbb{N}$. Let $K$ be a transition kernel from $E^d$ to $E$, $K$ is said to be \defi{$\mu$-supported} if for $\mu^d$-almost $\left(x_1,\dots,x_d\right)$, $K\left(x_1,\dots,x_d;.\right) \ll \mu$; if, moreover, for $\mu^d$-almost $\left(x_1,\dots,x_d\right)$, $\mu \ll K\left(x_1,\dots,x_d;.\right)$, then $K$ is said to be \defi{$\mu$-positive}.

For $K$ a $\mu$-supported transition kernel from $E^d$ to $E$, the \emph{$\mu$-density} of $K$ is the $\mu^{d+1}$-measurable function $k$ such that
\begin{displaymath}
\app{k}{E^{d+1}}{\mathbb{R}}{k\left(x_1,\dots,x_d;y\right)}{\displaystyle\frac{\d K\left(x_1,\dots,x_d;.\right)}{\d \mu}(y).}
\end{displaymath}

If, moreover, $K$ is $\mu$-positive, then, for $\mu^{d+1}$-almost $\left(x_1,\dots,x_d,y\right)$, $k\left(x_1,\dots,x_d;y\right)>0$.
\end{definition}

In the following, we will work with $\mu$-supported or $\mu$-positive kernels for $d=1$ (transition kernels of Markov chain) or $d=\card{N}=2$ (transition kernels of PCA).

We will see that such transition kernels permit to work with densities instead of measures. In the following, the \RN-derivative of any measure with respect to $\mu$ will be also shorten in $\mu$-density.\par 

An example of a Lebesgue-supported t.k. is the t.k. $T$ of Gaussian PCA (defined in Example~\ref{ex:GaussianPCA}). This t.k. is even Lebesgue-positive. In the following, we call a $\mu$-supported (resp. $\mu$-positive) PCA a PCA whose t.k. is $\mu$-supported (resp. $\mu$-positive).\par

We will make apparent below (in particular in Section~\ref{sec:PCAfa} and~\ref{sec:GaussianPCA}) that to describe the invariant distribution of a PCA, at least in the case where it admits a Markov chain as invariant distribution, that we have to work under a reference measure $\mu$, which depending on the case can be the Lebesgue measure, a discrete measure, or any $\sigma$-finite measure. An example of that is the PCA $\PCA{\tilde{G}}_{m,\sigma}$ of Example~\ref{ex:GaussianPCAd} for which we will find different invariant distributions according to whether the reference measure is the Lebesgue-measure or $\delta_a$.\par

\begin{remarque}
There exists some transition kernels that are not $\mu$-supported by any $\sigma$-finite measure $\mu$. For example, the t.k. $T$ from $\reals^2$ to $\reals$ defined by, for any $a,b \in \reals$, $C \in \mathcal{B}(\reals)$, $T(a,b;C) = \begin{cases} \delta_{a}(C) & \text{if } a \neq b \\ \displaystyle \int_C \frac{1}{\sqrt{2 \pi}}e^{\frac{-(c-a)^2}{2}} \d c & \text{if } a=b \end{cases}$ is not $\mu$-supported. Indeed, any measure $\mu$ that could support this PCA has necessarily an atom at each $x$ in $\reals$. Then, $\mu$ is not a $\sigma$-finite measure.
\end{remarque}

Studied PCA in this work are the set of $\mu$-supported PCA and its subset of $\mu$-positive PCA. For both sets, we characterize PCA that have an invariant horizontal zigzag Markov chain, as defined now.\par

Let define the horizontal zigzag Markov chains (HZMC) on $\mathbb{N}$. First, the geometrical structure of \defi{horizontal zigzag} is: the $t$th horizontal zigzag on a space-time diagram is 
\begin{displaymath}
\HZ_{\mathbb{N}}(t) = \left\{ \left( \left\lfloor \frac{i}{2} \right\rfloor, t + \frac{1+(-1)^{i+1}}{2} \right), i \in \mathbb{N} \right\}
\end{displaymath}
as illustrated in Figure~\ref{fig:HZMC}.

\begin{figure}
\begin{center}
\includegraphics{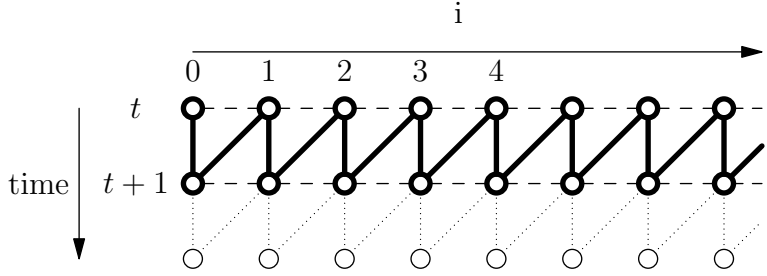}
\caption{In bold, $\HZ_{\mathbb{N}}(t)$, the $t$th horizontal zigzag on $\mathbb{N}$ on a space time diagram.}
\label{fig:HZMC}
\end{center}
\end{figure}

Since $\HZ_{\mathbb{N}}(t)$ is made by two lines corresponding to two successive times, a PCA $\PCA{A}$ on $\mathbb{N}$ can be seen as acting on the configurations of $\HZ_{\mathbb{N}}$. The image of a configuration $\left(S(i,t),S(i,t+1):~i \in \mathbb{N}\right)$ on $\HZ_{\mathbb{N}}(t)$ by the PCA $\PCA{A}$ is $\left(S(i,t+1),S(i,t+2):~i \in \mathbb{N}\right)$ on $\HZ_{\mathbb{N}}(t+1)$. Where the configuration of the second line of $\HZ_{\mathbb{N}}(t)$ becomes the configuration of the first line of $\HZ_{\mathbb{N}}(t+1)$ and the configuration of the second line of $\HZ_{\mathbb{N}}(t+1)$ is the image by $\PCA{A}$ of the second line of $\HZ_{\mathbb{N}}(t)$.\par

\begin{definition} \label{def:HZMC}
An \defi{horizontal zigzag Markov chain} (HZMC) on $\HZ_{\mathbb{N}}(t)$ with general alphabet $E$ is a Markov chain with two t.k. $D$ (for down) and $U$ (for up) from $E$ to $E$ and an initial probability distribution $\rho_0$ on $E$ such that
\begin{enumerate}
  \setlength{\itemsep}{1pt}
  \setlength{\parskip}{0pt}
  \setlength{\parsep}{0pt}
\item the distribution of state $S(0,t)$ is $\rho_0$,
\item the distribution of state $S(i,t+1)$ knowing $S(i,t)=x_i$ is $D(x_i;.)$ and
\item the distribution of state $S(i+1,t)$ knowing $S(i,t+1)=y_i$ is $U(y_i;.)$.
\end{enumerate}\par
\end{definition}

In the following, we study under which conditions a PCA admits a HZMC as invariant distribution. For $\mu$-supported PCA, the HZMC itself will be $\mu$-supported: a $(\rho_0,D,U)$-HZMC is $\mu$-supported if, $\rho_0 \ll \mu$ and $D$ and $U$ are $\mu$-supported. In that case, we denote $r_0$, $d$ and $u$ their respective $\mu$-densities. Hence, a $\mu$-supported $(\rho_0,D,U)$-HZMC is invariant by a $\mu$-supported PCA with t.k. $T$, if, for any $k\geq 0$, for $\mu$-almost $b_0,b_1,\dots,b_{k+1},c_0,\dots,c_{k} \in E$,
\begin{eqnarray} 
&& r_0(b_0) \left(\prod_{i=0}^{k} d(b_i;c_i) u(c_i;b_{i+1})\right) \nonumber \\
&& \quad \quad = \int_{E^{k+3}} r_0(a_0) \left(\prod_{i=0}^{k+1} d(a_i;b_i) u(b_i;a_{i+1})\right) \left(\prod_{i=0}^{k} t(b_i,b_{i+1};c_i) \right) \d \mu^{k+3}(a_0,\dots,a_{k+2})  \label{eq:brut}
\end{eqnarray}

The \defi{support} $\tilde{E}_{(\rho_0,D,U)}$ of a $(\rho_0,D,U)$-HZMC on $\HZ_{\mathbb{N}}(t)$ is the union of the support of the marginals of the first line of the HZMC, i.e. $\displaystyle \tilde{E}_{(\rho_0,D,U)} = \bigcup_{i \in \mathbb{N}} \text{supp}(\rho_i)$ where $\rho_i$ is the distribution of $S(i,t)$. When the $(\rho_0,D,U)$-HZMC is $\mu$-supported, then, for $\mu$-almost $x \in \tilde{E}_{(\rho_0,D,U)}$, there exists $i \in \mathbb{N}$ such that $r_i(x)>0$ (that holds because $E$ is a Polish space). In the case of a $\mu$-positive $(\rho_0,D,U)$-HZMC, $\tilde{E}_{(\rho_0,D,U)} = \text{supp}(\mu)$. When the context is clear, $\tilde{E}_{(\rho_0,D,U)}$ will be denoted $\tilde{E}$.\par

\begin{remarque}
Take a  $\mu$-supported PCA $\PCA{A}$ with t.k. $T$ and a $\mu$-supported $(\rho_0,D,U)$-HZMC with support $\tilde{E}$. Suppose that the $(\rho_0,D,U)$-HZMC is invariant by $\PCA{A}$. Now take a $\mu$-supported PCA $\PCA{A'}$ with t.k. $T'$ such that, for any $a,b \in \tilde{E}$, $T'(a,b;.)=T(a,b;.)$. Then, the $(\rho_0,D,U)$-HZMC is also invariant by $\PCA{A'}$. Hence, to characterize if a HZMC with support $\tilde{E}$ is invariant by a PCA with t.k. $T$, the value of $T(a,b;.)$ for $a$ or $b$ not in $\tilde{E}$ are not necessary.  
\end{remarque}

Let $\mu$ be a measure on $E$ and  $d: (a,c) \mapsto d(a;c)$ and $u: (c,b) \mapsto u(c;b)$ be two $\mu^2$-measurable functions from $E^2$ to $\reals$, then the $\mu^2$-measurable function $\truc{du}$ from $E^2$ to $\reals$ is defined by $\displaystyle \truc{du}(a;b) = \int_E d(a;c) u(c;b) \d \mu(c)$. For a $\mu$-supported HZMC, $\truc{du}(a;b)$ is the $\mu$-density of the t.k. ($DU$) of the Markov chain (induced by the HZMC) on the first line $S_t=(S(i,t):~i \in \mathbb{N})$ of $\HZ_{\mathbb{N}}(t)$.
\par

\subsubsection*{Main results}
We start with a generalization to Polish space alphabets of Lemma~2.3 in~\cite{CasMar14}.
\begin{theoreme}\label{theo:Toom}
Let $\mu$ be a $\sigma$-finite measure on a general alphabet $E$. Let $\PCA{A} := (\mathbb{N}, E, N, T)$ be a $\mu$-supported PCA and $(\rho_0,D,U)$ a $\mu$-supported HZMC with support $\tilde{E}$. The $(\rho_0,D,U)$-HZMC is invariant by $\PCA{A}$ if and only if the three following conditions are satisfied:
\begin{cond}\label{cond:Toom}
for $\mu^3$-almost $(a,b,c) \in \tilde{E}^3$, $t(a,b;c) \truc{du}(a;b) = d(a;c) u(c;b)$,
\end{cond}
\begin{cond}\label{cond:commut}
for $\mu^2$-almost $(a,b) \in \tilde{E}^2$, $\truc{du}(a;b) = \truc{ud}(a;b)$,
\end{cond}
\begin{cond}\label{cond:invD}
the Markov chain with t.k. $D$ possesses $\rho_0$ as invariant distribution, i.e. for $\mu$-almost $c$, $\displaystyle r_0(c) = \int_E r_0(a) d(a;c) \d \mu(a)$.
\end{cond}
\end{theoreme}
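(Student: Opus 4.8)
The plan is to show the equivalence by proving each direction, working throughout with the density form of invariance, Eq.~(\ref{eq:brut}). The key structural observation is that the integrand on the right-hand side of Eq.~(\ref{eq:brut}), after integrating out the $a_i$'s, factorizes nicely once we recognize the chain-structure of the HZMC and the local nature of $T$. Concretely, integrating over $a_0,\dots,a_{k+2}$ the product $r_0(a_0)\prod_{i=0}^{k+1} d(a_i;b_i) u(b_i;a_{i+1})$ couples only consecutive variables, so the integral telescopes into a product of $\truc{du}$ factors; the precise bookkeeping (which I would do once for general $k$) shows the RHS equals $\big(\int r_0(a_0) d(a_0;b_0)\,\d\mu(a_0)\big) \cdot \prod_{i=0}^{k}\truc{ud}(b_i;b_{i+1}) \cdot \prod_{i=0}^{k} t(b_i,b_{i+1};c_i)$ up to a careful treatment of the two boundary terms at $i=0$ and $i=k+1$.

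\textbf{Sufficiency.} Assume Conditions~\ref{cond:Toom}, \ref{cond:commut}, \ref{cond:invD}. Starting from the LHS of Eq.~(\ref{eq:brut}), I would apply Condition~\ref{cond:Toom} to each factor $d(b_i;c_i)u(c_i;b_{i+1})$, rewriting it as $t(b_i,b_{i+1};c_i)\truc{du}(b_i;b_{i+1})$, so that the LHS becomes $r_0(b_0)\big(\prod_{i=0}^{k}\truc{du}(b_i;b_{i+1})\big)\big(\prod_{i=0}^{k} t(b_i,b_{i+1};c_i)\big)$. Then Condition~\ref{cond:commut} lets me replace each $\truc{du}$ by $\truc{ud}$, and Condition~\ref{cond:invD} handles the leftmost factor: $r_0(b_0) = \int r_0(a_0) d(a_0;b_0)\,\d\mu(a_0)$. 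Matching this against the factorized form of the RHS computed above gives exactly Eq.~(\ref{eq:brut}). One must check that all manipulations are valid for $\mu$-almost every tuple in $\tilde E$ — this is where the $\mu$-supported hypothesis and the remark on the support of the HZMC are used, ensuring the exceptional null sets from the three conditions, when pulled back through the finite products, remain null.

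\textbf{Necessity.} Assume the HZMC is invariant, i.e. Eq.~(\ref{eq:brut}) holds for all $k$. Taking $k=0$ and varying which cells we test recovers a relation between $r_0(b_0)d(b_0;c_0)u(c_0;b_1)$ and an integral; comparing the cases $k$ and $k+1$ (a ``peeling'' argument: the identity for $k+1$ involves one extra pair of sites, and dividing — legitimate since in $\tilde E$ the relevant densities are $\mu$-a.e.\ positive on the support, or more carefully integrating against test sets — by the identity for $k$) isolates, one factor at a time, the local identities. Specifically, this peeling should yield first Condition~\ref{cond:invD} (from the leftmost boundary), then Condition~\ref{cond:Toom} (from a generic interior factor, using that $t$ is the $\mu$-density of a genuine transition kernel so $\int t(b_i,b_{i+1};c_i)\,\d\mu(c_i)=1$), and finally Condition~\ref{cond:commut} (forced by consistency across different values of $k$, i.e.\ the product of $\truc{du}$'s must agree with the product of $\truc{ud}$'s). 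The main obstacle — and the place genuinely new compared to the finite-alphabet Lemma~2.3 of~\cite{CasMar14} — is the measurability and null-set management in this peeling step: one cannot ``divide'' pointwise, so the separation of the three conditions must be carried out by testing against arbitrary Borel rectangles and invoking uniqueness of $\mu$-densities, and one must verify that the union over $k\in\mathbb{N}$ of the exceptional sets is still $\mu$-negligible. I expect this bookkeeping, rather than any conceptual difficulty, to be the bulk of the work.
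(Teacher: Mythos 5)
Your proposal is correct and essentially reproduces the paper's proof: the sufficiency computation is exactly the paper's (integrate out the $a_i$'s in Eq~(\ref{eq:brut}) to obtain $r_0(b_0)\prod_{i}\truc{ud}(b_i;b_{i+1})\,t(b_i,b_{i+1};c_i)$, then apply the three conditions), and your necessity ``peeling'' is just the marginalized form of the paper's argument, which computes the three-point law of $\left(S(i,t),S(i+1,t),S(i,t+1)\right)$ and the two-point law of the line at time $t+1$ in two ways and identifies $\mu$-densities, so no pointwise division is ever needed. The only slight misstatement is that \C~\ref{cond:commut} does not come from consistency across different $k$: it follows at fixed $k$ by integrating the same identity over the $c$-variables (comparing $r_i\truc{du}$ with $r_i\truc{ud}$), and \C~\ref{cond:Toom}--\ref{cond:commut} are obtained on $\{a:\,r_i(a)>0\}$, whose union over $i$ covers $\tilde{E}$ up to a $\mu$-null set.
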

\par

We arrive to our main Theorem~\ref{theo:main}. When a PCA with t.k. $T$ is $\mu$-positive, we can go further and reduce the existence of an invariant HZMC for the PCA to the existence of a function $\eta$ solution to a cubic integral equation on $T$. In case of existence, we can express the kernels of the invariant HZMC using $\eta$ and $T$. Let us first introduce some material.\par
Let $\PCA{A}$ be a PCA with t.k. $T$ whose $\mu$-density is $t$. Define, for any positive measurable function $\phi \in L^1(\mu)$ (i.e. for $\mu$-almost $x \in E$, $\phi(x) > 0$ and $\displaystyle \int_E \phi(x) \d \mu(x) < \infty$), the two $\mu^2$-measurable functions $d^\phi: E^2 \mapsto \reals$ and $u^\phi: E^2 \mapsto \reals$ by
\begin{equation}\label{eq:DUeta}
d^\phi(a;c) = \frac{\displaystyle \int_E \frac{\phi(x)}{t(a,x;c_0)} t(a,x;c) \d \mu(x)}{\displaystyle \int_E \frac{\phi(x)}{t(a,x;c_0)} \d \mu(x)} \text{ and } u^\phi(c;b) = \frac{\displaystyle \frac{\phi(b)}{t(a_0,b;c_0)} t(a_0,b;c)}{\displaystyle \int_E \frac{\phi(x)}{t(a_0,x;c_0)} t(a_0,x;c) \d \mu(x)}.
\end{equation}

\begin{theoreme} \label{theo:main}
Let $\mu$ be a $\sigma$-finite measure on a general alphabet $E$. Let $\PCA{A} := (\mathbb{N}, E, N, T)$ be a $\mu$-positive PCA. $\PCA{A}$ admits a $\mu$-positive invariant HZMC if and only if the three following conditions are satisfied:

\begin{cond}\label{cond:Belyaev}% 
there exists a triplet $(a_0,b_0,c_0) \in E^3$ such that $T(a_0,b_0;.)$ and $\mu$ are positive equivalent and, for $\mu^3$-almost $(a,b,c)$,
\begin{displaymath} \label{eq:Belyaev}
t(a,b;c) t(a_0,b_0;c) t(a_0,b;c_0) t(a,b_0;c_0) = t(a_0,b_0;c_0) t(a,b;c_0) t(a,b_0;c) t(a_0,b;c),
\end{displaymath}
\end{cond}

\begin{cond}\label{cond:pfff}
there exists a positive function $\eta \in L^1(\mu)$ solution to: for $\mu^2$-almost $(a,b)$ and for the $(a_0,c_0)$ of \C~\ref{cond:Belyaev},
\begin{equation}\label{eq:pfff}
\frac{ \displaystyle \frac{\eta(b)}{t(a,b;c_0)} }{ \displaystyle \int_E \frac{\eta(x)}{t(a,x;c_0)} \d \mu(x) } = \int_E \frac{ \displaystyle \frac{\eta(c)}{t(a_0,c;c_0)} t(a_0,c;a)}{ \displaystyle \int_E \frac{\eta(x) }{ t(c,x;c_0)} \d \mu(x) } \frac{ \displaystyle \int_E \frac{\eta(x)}{t(c,x;c_0)} t(c,x;b) \d \mu(x)}{ \displaystyle \int_E \frac{\eta(x)}{t(a_0,x;c_0)} t(a_0,x;a) \d \mu(x)} \d \mu(c),
\end{equation}
\end{cond}

\begin{cond}\label{cond:invDex}
the Markov chain with t.k. $D^\eta$, whose $\mu$-density is $d^\eta$ given by Eq~(\ref{eq:DUeta}), possesses as invariant distribution a probability distribution $\rho_0$.
\end{cond}

In this case, the $(\rho_0,D^\eta,U^\eta)$-HZMC where $D^\eta$ and $U^\eta$ are t.k. of $\mu$-densities given by Eq~(\ref{eq:DUeta}) is invariant by $\PCA{A}$. 
\end{theoreme}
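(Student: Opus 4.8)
\emph{Proof idea.}

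The plan is to reduce Theorem~\ref{theo:main} to Theorem~\ref{theo:Toom} by solving the first condition of the latter for the kernels $D,U$ in terms of $T$. Observe first that a $\mu$-positive PCA or HZMC is in particular $\mu$-supported, and that the support of a $\mu$-positive HZMC equals $\text{supp}(\mu)$, so Theorem~\ref{theo:Toom} applies and the qualifier ``$\mu^3$-almost $(a,b,c)\in\tilde{E}^3$'' in \C~\ref{cond:Toom} becomes simply ``$\mu^3$-almost $(a,b,c)$''. The core of the argument is the following computational claim, which I would establish first. Assume \C~\ref{cond:Belyaev} holds for a triple $(a_0,b_0,c_0)$, and let $\phi\in L^1(\mu)$ be positive with $\int_E\phi(x)/t(a,x;c_0)\,\d\mu(x)<\infty$ for $\mu$-almost every $a$. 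Then: \textbf{(a)} the functions $d^\phi,u^\phi$ of Eq~(\ref{eq:DUeta}) are $\mu$-densities of genuine $\mu$-positive transition kernels $D^\phi,U^\phi$ from $E$ to $E$; \textbf{(b)} $\truc{d^\phi u^\phi}(a;b)=\phi(b)\big/\big(t(a,b;c_0)\int_E\phi(x)/t(a,x;c_0)\,\d\mu(x)\big)$; \textbf{(c)} $(d^\phi,u^\phi)$ satisfies \C~\ref{cond:Toom}; \textbf{(d)} $(d^\phi,u^\phi)$ satisfies \C~\ref{cond:commut} if and only if $\phi$ is a solution of Eq~(\ref{eq:pfff}).

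The claim rests on one observation: after clearing denominators, the identity of \C~\ref{cond:Belyaev} says exactly that $g(a,c):=t(a,b;c)\,t(a_0,b;c_0)\big/\big(t(a,b;c_0)\,t(a_0,b;c)\big)$ does not depend on $b$, equivalently $t(a,x;c)/t(a,x;c_0)=g(a,c)\,t(a_0,x;c)/t(a_0,x;c_0)$ for $\mu^3$-almost every $(a,x,c)$, with $g(a_0,\cdot)\equiv 1$ and $g(\cdot,c_0)\equiv 1$. Substituting this into the numerators of Eq~(\ref{eq:DUeta}) turns $d^\phi$ and $u^\phi$ into explicit products of a function of the first argument by a function of the second; part (a) is then routine Tonelli bookkeeping using $\int_E t(a,x;c)\,\d\mu(c)=1$, and part (b) follows from the extra identity $\int_E g(a,c)\,t(a_0,b;c)\,\d\mu(c)=t(a_0,b;c_0)/t(a,b;c_0)$ (again by $\int_E t(a,b;c)\,\d\mu(c)=1$). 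Part (c) is the direct check that $t(a,b;c)\,\truc{d^\phi u^\phi}(a;b)=d^\phi(a;c)\,u^\phi(c;b)$; for part (d) one inserts (b) into $\truc{d^\phi u^\phi}$ and computes $\truc{u^\phi d^\phi}$ directly from Eq~(\ref{eq:DUeta}), and the two resulting formulas are precisely the left- and right-hand sides of Eq~(\ref{eq:pfff}).

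With the claim at hand, the ``if'' direction is immediate: assuming \C~\ref{cond:Belyaev}, \ref{cond:pfff} and \ref{cond:invDex}, apply the claim with $\phi=\eta$; then $(d^\eta,u^\eta)$ satisfies \C~\ref{cond:Toom} by (c) and \C~\ref{cond:commut} by (d) together with \C~\ref{cond:pfff}, \C~\ref{cond:invDex} is \C~\ref{cond:invD} for $D^\eta$, and by (a) the triple $(\rho_0,D^\eta,U^\eta)$ is a $\mu$-positive HZMC; Theorem~\ref{theo:Toom} then yields that it is invariant by $\PCA{A}$. For the ``only if'' direction, take a $\mu$-positive invariant HZMC $(\rho_0,D,U)$ with $\mu$-densities $r_0,d,u$; by Theorem~\ref{theo:Toom} it satisfies \C~\ref{cond:Toom}--\ref{cond:invD}. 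Using Fubini and the $\mu$-positivity of $\PCA{A}$, $D$ and $U$, fix a triple $(a_0,b_0,c_0)$ at which $T(a_0,b_0;\cdot)$ is positive equivalent to $\mu$ and the finitely many ``almost everywhere'' positivity and identity statements needed below hold (in particular $t(a_0,\cdot;c_0)>0$, $d(\cdot;c_0)>0$, $u(c_0;\cdot)>0$ $\mu$-almost everywhere, $\truc{du}>0$ $\mu^2$-almost everywhere, and the relevant instances of \C~\ref{cond:Toom}). Then \C~\ref{cond:Toom} rewrites as $t(a,b;c)/t(a,b;c_0)=d(a;c)u(c;b)\big/\big(d(a;c_0)u(c_0;b)\big)$, and eliminating $d$ and $u$ by comparing the slices at $a=a_0$ and $b=b_0$ yields the identity of \C~\ref{cond:Belyaev}. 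Finally set $\eta:=u(c_0;\cdot)$, a positive probability density (hence in $L^1(\mu)$); inserting $t(a,x;c)/t(a,x;c_0)=d(a;c)u(c;x)\big/\big(d(a;c_0)u(c_0;x)\big)$ and $1/t(a,x;c_0)=\truc{du}(a;x)\big/\big(d(a;c_0)u(c_0;x)\big)$ into Eq~(\ref{eq:DUeta}) and using $\int_E u(c;x)\,\d\mu(x)=1$ and $\int_E\truc{du}(a;x)\,\d\mu(x)=1$ gives $d^\eta=d$ and $u^\eta=u$ $\mu^2$-almost everywhere (and $\int_E\eta(x)/t(a,x;c_0)\,\d\mu(x)=1/d(a;c_0)<\infty$, so $\eta$ is admissible for the claim). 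Then \C~\ref{cond:commut} for $(d,u)=(d^\eta,u^\eta)$ is, by (d), exactly \C~\ref{cond:pfff}, and \C~\ref{cond:invD} for $D=D^\eta$ is \C~\ref{cond:invDex}.

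I expect the main obstacle to be not the algebra — which is essentially forced once one sees the $b$-independence of $g(a,c)$ and guesses $\eta=u(c_0;\cdot)$ — but the measure-theoretic bookkeeping: checking that the null sets generated by the successive ``almost everywhere'' statements can all be avoided by a single choice of $(a_0,b_0,c_0)$, verifying that the integrals appearing in Eq~(\ref{eq:DUeta}) and Eq~(\ref{eq:pfff}) are finite and strictly positive for the arguments that matter, and justifying the Tonelli interchanges — precisely the measurability issues emphasized in the introduction.
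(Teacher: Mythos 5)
Your proposal is correct and follows essentially the same route as the paper: reduce to Theorem~\ref{theo:Toom}, show that \C~\ref{cond:Toom} forces \C~\ref{cond:Belyaev} by eliminating $d,u$, parametrize the admissible kernels by $\eta=u(c_0;\cdot)$ so that they take the form $d^\eta,u^\eta$ of Eq~(\ref{eq:DUeta}), and identify \C~\ref{cond:commut} with Eq~(\ref{eq:pfff}) and \C~\ref{cond:invD} with \C~\ref{cond:invDex} (the paper packages this as Lemmas~\ref{lem:first} and~\ref{lem:second}, while you package it as the claim (a)--(d) via the $b$-independence of $g(a,c)$, which plays the role of the paper's intermediate six-variable condition). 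Your explicit verification of parts (a)--(c) in the converse direction is, if anything, slightly more detailed than the paper's ``all the previous computations hold''.
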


\begin{remarque}
If \C~\ref{cond:Belyaev} and \C~\ref{cond:pfff} hold and if $E$ is finite, the Markov chain with t.k. $D^\eta$ is irreducible and aperiodic (because, for any $a,c \in E$, $D^\eta(a,c)>0$) and, so, it possesses a unique invariant distribution, i.e. \C~\ref{cond:invDex} always holds. If $E$ is not finite, we refer the reader to the book of Meyn and Tweedie~\cite{Meyn09} to get some conditions on $D^\eta$ for which the Markov chain with t.k. $D^\eta$ possesses an invariant distribution.
\end{remarque}

When the alphabet is finite, we can go further and show that if $\eta$ satisfies Eq~(\ref{eq:pfff}) then $\eta$ is the eigenvector of a computable matrix, obtaining such a way a simple and strong condition for the existence of such $\eta$ (this is done in~\cite{CasMar14}). For PCA with a general alphabet, this can not be done due to measurability issues that, roughly, do not allow us to take $a=b$ in Eq~(\ref{eq:pfff}). Nevertheless, under stronger conditions on $t$, we can characterize a set of functions that contains the set of functions $\eta$ solution to Eq~(\ref{eq:pfff}).
\begin{proposition}\label{prop:diag}
Let $\mu$ be a $\sigma$-finite measure on a general alphabet $E$. Let $\PCA{A} := (\mathbb{Z}, E, N, T)$ be a $\mu$-positive PCA. Suppose that \C~\ref{cond:Belyaev} and the two following conditions are satisfied:

\begin{cond}\label{cond:BelyaevDiag}%
for the same triplet $(a_0,b_0,c_0)$ of \C~\ref{cond:Belyaev}, for $\mu^2$-almost $(a,c)$,  
\begin{equation} \label{eq:BelyaevDiag}
t(a,a;c) t(a_0,b_0;c) t(a_0,a;c_0) t(a,b_0;c_0) = t(a_0,b_0;c_0) t(a,a;c_0) t(a,b_0;c) t(a_0,a;c),
\end{equation}
\end{cond}

\begin{cond}\label{cond:pfffDiag}
there exists a positive function $\eta \in L^1(\mu)$ solution to: for $\mu$-almost $a$ and for the $(a_0,c_0)$ of \C~\ref{cond:Belyaev},
\begin{equation}
\frac{ \displaystyle \frac{\eta(a)}{t(a,a;c_0)} }{ \displaystyle \int_E \frac{\eta(k)}{t(a,x;c_0)} \d \mu(x) } = \int_E \frac{ \displaystyle \frac{\eta(c)}{t(a_0,c;c_0)} t(a_0,c;a)}{ \displaystyle \int_E \frac{\eta(x) }{ t(c,x;c_0)} \d \mu(x) } \frac{ \displaystyle \int_E \frac{\eta(x)}{t(c,x;c_0)} t(c,x;a) \d \mu(x)}{ \displaystyle \int_E \frac{\eta(x)}{t(a_0,x;c_0)} t(a_0,x;a) \d \mu(x)} \d \mu(c).
\end{equation}
\end{cond}

Then, $\eta$ is a positive eigenfunction of 
\begin{displaymath}
\mathcal{A}_2 : f \mapsto \left( \mathcal{A}_2(f) : a \mapsto \int_E f(k) \frac{t(a,a;c_0)}{t(a,x;c_0)} \nu(a) \d \mu(x) \right)
\end{displaymath}
where $\nu$ is a positive eigenfunction (unique up to a multiplicative constant) in $L^1(\mu)$ of
\begin{displaymath}
\mathcal{A}_1 : f \mapsto \left( \mathcal{A}_1(f) : a \mapsto \int f(c) t(c,c;a) \d \mu(c) \right).
\end{displaymath}
\end{proposition}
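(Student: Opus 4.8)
The plan is to turn the diagonal cubic equation of \C~\ref{cond:pfffDiag} into two linear eigenfunction identities — one for $\mathcal{A}_1$, the other for $\mathcal{A}_2$ — by feeding the two Belyaev-type multiplicative relations into its right-hand side, exactly as in the finite-alphabet computation of~\cite{CasMar14}; the genuinely new part is measure-theoretic. Throughout I write $t$ for the $\mu$-density of $T$ and abbreviate $I(a) = \int_E \eta(x)/t(a,x;c_0)\,\d\mu(x)$, the denominator appearing on the left of \C~\ref{cond:pfffDiag}.

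First I would put $t$ in a form that is separable in its last argument. Since $\PCA{A}$ is $\mu$-positive and $T(a_0,b_0;.)$ is positive equivalent to $\mu$, every density having one coordinate equal to $a_0$, $b_0$ or $c_0$ is strictly positive on a co-null set, so dividing in \C~\ref{cond:Belyaev} yields, for $\mu^3$-almost $(a,b,c)$,
\[
\frac{t(a,b;c)}{t(a,b;c_0)} \;=\; \frac{h_a(c)\,k_b(c)}{g(c)},
\]
where $g(c)=t(a_0,b_0;c)/t(a_0,b_0;c_0)$, $h_a(c)=t(a,b_0;c)/t(a,b_0;c_0)$ and $k_b(c)=t(a_0,b;c)/t(a_0,b;c_0)$; likewise \C~\ref{cond:BelyaevDiag} gives the same factorisation along the diagonal, $t(a,a;c)/t(a,a;c_0)=h_a(c)k_a(c)/g(c)$ for $\mu^2$-almost $(a,c)$, in particular $t(c,c;a)>0$ for $\mu^2$-almost $(c,a)$. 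By Fubini these $\mu$-almost-everywhere identities may be used inside the integrals below.

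Next I would plug this into the right-hand side of \C~\ref{cond:pfffDiag}. With the middle coordinate being the integration variable, the inner integral collapses, $\int_E \frac{\eta(x)}{t(c,x;c_0)}t(c,x;a)\,\d\mu(x) = \frac{h_c(a)}{g(a)}\int_E \eta(x)k_x(a)\,\d\mu(x)$, which for $c=a_0$ reduces to $\int_E \eta(x)k_x(a)\,\d\mu(x)$ (strictly positive, since $\eta>0$); hence the ratio of the two inner integrals in \C~\ref{cond:pfffDiag} equals $h_c(a)/g(a)$. Using also $t(a_0,c;a)/t(a_0,c;c_0)=k_c(a)$, the right-hand side of \C~\ref{cond:pfffDiag} becomes $\frac{1}{g(a)}\int_E \frac{\eta(c)}{I(c)}k_c(a)h_c(a)\,\d\mu(c)$; applying the diagonal factorisation with $(a,c)$ interchanged, $h_c(a)k_c(a)=g(a)\,t(c,c;a)/t(c,c;c_0)$, cancels $g(a)$, and since the left-hand side of \C~\ref{cond:pfffDiag} is $\eta(a)/(t(a,a;c_0)I(a))$, the identity becomes, for $\mu$-almost $a$,
\[
\frac{\eta(a)}{t(a,a;c_0)\,I(a)} \;=\; \int_E \frac{\eta(c)}{t(c,c;c_0)\,I(c)}\,t(c,c;a)\,\d\mu(c).
\]
Setting $\nu(a):=\eta(a)/(t(a,a;c_0)I(a))$ — positive $\mu$-a.e., and, as one must check, in $L^1(\mu)$ — this reads $\nu=\mathcal{A}_1\nu$, so $\nu$ is a positive eigenfunction of $\mathcal{A}_1$; by Perron–Frobenius / Krein–Rutman for $\mathcal{A}_1$, whose kernel $t(c,c;a)$ is positive $\mu^2$-a.e.\ by the previous step, such an eigenfunction is unique up to a multiplicative constant, hence coincides (up to scaling) with the $\nu$ of the statement, and $\mathcal{A}_2$ is well defined up to the same scalar. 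Finally, unravelling the definition of $\nu$ gives $\eta(a)=\nu(a)\,t(a,a;c_0)\int_E \eta(x)/t(a,x;c_0)\,\d\mu(x)=(\mathcal{A}_2\eta)(a)$, so $\eta$ is a positive eigenfunction of $\mathcal{A}_2$, which is the claim.

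The algebra above is short; the hard part will be the measure-theoretic bookkeeping: choosing $(a_0,b_0,c_0)$ outside the exceptional sets of all three conditions; checking that each density used as a denominator — $t(\cdot,\cdot;c_0)$, $t(\cdot,b_0;c_0)$, $t(a_0,\cdot;c_0)$, $I(\cdot)$ and $\int_E \eta(x)k_x(a)\,\d\mu(x)$ — is positive on a co-null set; verifying $\nu\in L^1(\mu)$, which is bound up with how the diagonal values $t(a,a;\cdot)$ are normalised; and justifying each exchange of a $\mu$-almost-everywhere identity with an integral by Fubini. For the uniqueness of $\nu$ I would also need, beyond the positivity of the kernel of $\mathcal{A}_1$, an irreducibility input of the kind available from~\cite{Meyn09}. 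These are precisely the measurability issues the paper sets out to clarify.
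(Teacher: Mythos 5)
Your proposal is correct and follows essentially the same route as the paper: the explicit factorisation via $h_a$, $k_b$, $g$ is just the spelled-out form of the paper's step ``replace $a_0$ by $c$ using \C~\ref{cond:Belyaev} and \C~\ref{cond:BelyaevDiag}'', leading to the same identity $\nu(a)=\int_E \nu(c)\,t(c,c;a)\,\d\mu(c)$ with $\nu(a)=\frac{\eta(a)/t(a,a;c_0)}{\int_E \eta(x)/t(a,x;c_0)\,\d\mu(x)}$, after which uniqueness of the positive $L^1$ eigenfunction and the unravelling to $\mathcal{A}_2$ coincide with the paper's argument. The only cosmetic difference is that the paper invokes Lemma~\ref{lem:Durrett} for the uniqueness of $\nu$ (no extra irreducibility input from~\cite{Meyn09} is needed), and keeps an explicit scalar $\lambda$ where you absorb it into the scaling of $\nu$.
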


\begin{remarque} \label{rem:positivePCA}
$\bullet$ Any positive PCA with finite alphabet $E$ (i.e. for all $a,b,c$, $T(a,b;c)>0$) is a $\mu_E$-positive PCA where $\mu_E$ is the counting measure on $E$. Hence, \C~\ref{cond:BelyaevDiag} and \C~\ref{cond:pfffDiag} are necessary implied by \C~\ref{cond:Belyaev} and \C~\ref{cond:pfff} in the case of finite alphabets. Moreover, in that case, $\mathcal{A}_1$ and $\mathcal{A}_2$ have their own unique eigenfunction (due to Perron-Frobenius theorem) and \C~\ref{cond:invDex} holds necessarily. So, applying Theorem~\ref{theo:main} and~Prop~\ref{prop:diag} to positive PCA give Theorem~2.6 in~\cite{CasMar14}.\par
$\bullet$ Let $E = \reals$ and $\mu$ be the Lebesgue measure. In the case where $t$ is \emph{continuous at any point} of $E^3$, then \C~\ref{cond:Belyaev} and \C~\ref{cond:pfff} imply \C~\ref{cond:BelyaevDiag} and \C~\ref{cond:pfffDiag} by continuity. And so a solution $\eta$ to Eq~(\ref{eq:pfff}) is a function $\eta$ given by Prop~\ref{prop:diag}.\par
$\bullet$ If for a PCA $\PCA{A}$ the conditions of Prop~\ref{prop:diag} do not hold, it is in general complex to find a function $\eta$ solution to Eq~(\ref{eq:pfff}). But, it may happen that a $\mu$-equivalent PCA $\PCA{A'}$ to $\PCA{A}$ (see Definition~\ref{def:muEquivalence} in Section~\ref{sec:prel}) satisfies the conditions of Prop~\ref{prop:diag}. Hence, in the best-case scenario, we can characterize, thanks to Prop~\ref{prop:diag}, a $(\rho_0,D^\eta,U^\eta)$-HZMC invariant by $\PCA{A'}$. And, by Prop~\ref{prop:sameInv} (in Section~\ref{sec:prel}), this HZMC is also invariant by $\PCA{A}$. An application of this method is shown in Section~\ref{sec:GaussianPCA} where it is proved that AR(1)~process is an invariant distribution of $\PCA{\tilde{G}}_{m,\sigma}$ (defined in Example~\ref{ex:GaussianPCAd}). 
\end{remarque}

The uniqueness (up to a multiplicative constant) of the eigenfunction $\nu$ (in Prop~\ref{prop:diag}) is a consequence of the following lemma.
\begin{lemme}[Theorem~6.8.7 of Durrett~\cite{Dur10}] \label{lem:Durrett}
Let $\mathcal{A}:~f \mapsto \left( \mathcal{A}(f) : y \mapsto \displaystyle \int_E f(x) m(x;y) \mu(\d x) \right)$ be an integral operator of kernel $m$. If $m$ is the $\mu$-density of a $\mu$-positive t.k. $M$ from $E$ to $E$, then $\mathcal{A}$ possesses at most one positive eigenfunction in $L^1(\mu)$ (up to a multiplicative constant).
\end{lemme}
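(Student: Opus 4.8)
The plan is to reduce the uniqueness claim to a statement about the existence of a unique invariant probability measure for an associated Markov chain, so that standard ergodic theory (the cited Theorem~6.8.7 of Durrett) applies directly. Concretely, suppose $f_1, f_2 \in L^1(\mu)$ are two positive eigenfunctions of $\mathcal{A}$, say $\mathcal{A}(f_i) = \lambda_i f_i$. First I would argue that $\lambda_1 = \lambda_2$: integrating the eigenfunction equation against $\mu$ and using Tonelli (all quantities nonnegative) gives $\lambda_i \int_E f_i \, \d\mu = \int_E \left( \int_E f_i(x) m(x;y)\, \mu(\d x) \right) \mu(\d y) = \int_E f_i(x) \left( \int_E m(x;y)\, \mu(\d y) \right) \mu(\d x)$, and since $m$ is the $\mu$-density of a transition kernel $M$, the inner integral is $M(x;E) = 1$ for $\mu$-almost every $x$; hence $\lambda_i = 1$ for each $i$ (and in particular the common value is forced). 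So both $f_1$ and $f_2$ are fixed points of $\mathcal{A}$.

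Next I would pass from the eigenfunction to a measure. Given a positive fixed point $f$ of $\mathcal{A}$ with $\int_E f\,\d\mu < \infty$, normalize so that $\int_E f\,\d\mu = 1$ and set $\pi_f(\d x) = f(x)\,\mu(\d x)$, a probability measure on $E$. The fixed-point equation $f(y) = \int_E f(x) m(x;y)\,\mu(\d x)$ says exactly that $\pi_f$ is stationary for the Markov chain with transition kernel $M$: for any $Y \in \borel{E}$, $\int_E M(x;Y)\,\pi_f(\d x) = \int_E \left( \int_Y m(x;y)\,\mu(\d y) \right) f(x)\,\mu(\d x) = \int_Y f(y)\,\mu(\d y) = \pi_f(Y)$, again by Tonelli. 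Conversely, any stationary probability measure $\pi$ of $M$ which is absolutely continuous with respect to $\mu$ has a density that is a positive (by $\mu$-positivity of $M$, its density is $\mu$-a.e.\ positive) $L^1(\mu)$ fixed point of $\mathcal{A}$. Thus positive $L^1(\mu)$ eigenfunctions of $\mathcal{A}$, up to normalization, are in bijection with $\mu$-absolutely-continuous stationary probability measures of $M$; but since $M$ is $\mu$-positive, \emph{every} stationary probability measure is equivalent to $\mu$ (if $\pi(A) = 0$ then stationarity forces $\int M(x;A)\,\pi(\d x)=0$, and $M(x;A) > 0$ whenever $\mu(A) > 0$ for $\mu$-a.e.\ $x$, so $\mu(A)=0$). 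Hence the two candidate densities $\pi_{f_1}$ and $\pi_{f_2}$ are both stationary probability measures of the irreducible (indeed $\mu$-positive, hence Harris) chain $M$, and Durrett's Theorem~6.8.7 — uniqueness of the stationary distribution for a Harris recurrent / $\mu$-irreducible chain — gives $\pi_{f_1} = \pi_{f_2}$, i.e.\ $f_1 = f_2$ $\mu$-a.e.\ after the normalization, which is the claim up to the multiplicative constant.

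The main obstacle I anticipate is the precise invocation of the ergodic-theory input: one must check that $\mu$-positivity of $M$ yields exactly the hypotheses under which Durrett's theorem asserts uniqueness (positive recurrence / Harris recurrence, aperiodicity is not needed for uniqueness of the stationary measure, only $\mu$-irreducibility), and one must be careful that the eigenfunction is genuinely in $L^1(\mu)$ so the normalization step is legitimate — this is why the statement explicitly restricts to $L^1(\mu)$. A secondary technical point is the $\mu$-a.e.\ positivity of the density of a stationary measure, which follows from the $\mu$-positivity of $M$ but should be stated carefully. All the integral manipulations are routine Tonelli arguments since every integrand is nonnegative, so no delicate interchange-of-integrals issue arises.
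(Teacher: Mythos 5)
The paper offers no proof of its own here: the lemma is stated purely as a citation of Theorem~6.8.7 of Durrett, so your proposal supplies exactly the translation that the citation leaves implicit, and in that sense it follows the same route. The reduction itself is sound: the Tonelli computation forcing the eigenvalue to equal $1$, the correspondence between normalized positive $L^1(\mu)$ fixed points of $\mathcal{A}$ and stationary probability measures of $M$ absolutely continuous with respect to $\mu$, and the $\mu$-a.e.\ positivity of such stationary densities are all correct, and uniqueness of the invariant probability for a $\mu$-irreducible chain is indeed the right external input.

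Two caveats on the final step. First, your parenthetical claim that \emph{every} stationary probability measure of $M$ is equivalent to $\mu$ is false: $\mu$-positivity constrains $M(x;\cdot)$ only for $\mu$-almost every $x$, and on the exceptional $\mu$-null set the kernel may have, say, Dirac fixed points, producing stationary laws singular to $\mu$ (the same phenomenon as in Example~\ref{ex:GaussianPCAd}, where the Lebesgue-positive PCA $\PCA{\tilde{G}}_{m,\sigma}$ nevertheless keeps every constant configuration invariant). Your argument only shows that a stationary $\pi$ with $\pi(A)=0<\mu(A)$ is carried by a $\mu$-null set, not that $\mu(A)=0$. Fortunately the claim is not needed, since both candidate measures $\pi_{f_1},\pi_{f_2}$ are absolutely continuous with respect to $\mu$. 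Second, and for the same reason, the chain with kernel $M$ need not be $\mu$-irreducible or Harris from \emph{every} starting point, so the hypotheses of the uniqueness theorem are not literally satisfied as you invoke them; the standard repair is to modify $M$ on the exceptional $\mu$-null set (e.g.\ by a fixed law equivalent to $\mu$), which changes neither the operator $\mathcal{A}$ nor the stationarity of $\pi_{f_1}$ and $\pi_{f_2}$, and then apply uniqueness to the modified, genuinely irreducible chain. If you want to avoid Markov chain theory altogether, note that with $g=f_1-f_2$ (both normalized) one has $\mathcal{A}(g)=g$ and, since $|g|=|\mathcal{A}(g)|\le\mathcal{A}(|g|)$ pointwise while $\int \mathcal{A}(|g|)\,\d\mu=\int|g|\,\d\mu$, also $\mathcal{A}(|g|)=|g|$ $\mu$-a.e.; hence $g_+$ and $g_-$ are nonnegative fixed points, and if $g\not\equiv 0$ both are nonzero (as $\int g\,\d\mu=0$), hence both $\mu$-a.e.\ positive by the $\mu^2$-a.e.\ positivity of $m$, contradicting $g_+g_-=0$.
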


\subsubsection*{Content}
In Section~\ref{sec:prel}, we recall some facts about \RN{} theorem and, then, state some properties of $\mu$-supported and $\mu$-positive PCA.\par

Section~\ref{sec:ex} is dedicated to some examples of PCA. In Section~\ref{sec:PCAfa}, we show applications of Theorems~\ref{theo:Toom} and~\ref{theo:main} and Prop~\ref{prop:diag} to PCA with finite alphabets. In Section~\ref{sec:GaussianPCA}, we use Theorem~\ref{theo:main} and Prop~\ref{prop:diag} to show that the law of an autoregressive process of order $1$ (AR(1)~process) is invariant by both Gaussian PCA $\PCA{G}_{m,\sigma}$ and $\PCA{\tilde{G}}_{m,\sigma}$ (defined in Example~\ref{ex:GaussianPCA} and~\ref{ex:GaussianPCAd}). In Section~\ref{sec:BetaPCA}, we present a Lebesgue-supported PCA called Beta PCA. In Section~\ref{sec:PCAlit}, we present first a PCA with alphabet $\reals$ that simulates a synchronous TASEP on $\mathbb{R}$ as defined by Blank~\cite{Blank12} and, then, a PCA with alphabet $\reals$ that simulates the first-passage percolation as presented by Kesten~\cite{Kesten87} on a particular graph $\mathcal{G}$. Unfortunately, Theorem~\ref{theo:Toom} and~\ref{theo:main} do not apply to these two PCA.\par

In Section~\ref{sec:proof}, Theorems~\ref{theo:Toom} and~\ref{theo:main} and Prop~\ref{prop:diag}, the main contributions of the paper, are proved.\par

Section~\ref{sec:extension} is devoted to extensions of Theorems~\ref{theo:Toom} and~\ref{theo:main} for PCA on $\mathbb{Z}$ and $\cyl{n}$. First, we extend in both cases the notion of HZMC: $\text{HZMC}_{\mathbb{Z}}$ on $\mathbb{Z}$ and cyclic-HZMC (CHZMC) on $\cyl{n}$ (if $E$ is finite, a CHZMC is an HZMC conditioned to be periodic and, in the general case, it is a Gibbs measure). Then, we characterize PCA letting $\text{HZMC}_{\mathbb{Z}}$ invariant, and also PCA letting CHZMC invariant.\par

\section{Preliminaries}\label{sec:prel}
We recall here some facts around \RN{} theorem.\par

Let $\mu$ and $\nu$ be two measures on $E$. $\mu$ is \defi{equal} to $\nu$ ($\mu=\nu$) if for all $A \in \borel{E}$, $\mu(A)=\nu(A)$. $\mu$ is \defi{absolutely continuous} with respect to $\nu$ ($\mu \ll \nu$) if, for all $A \in \borel{E}$, $\nu(A)=0 \Rightarrow \mu(A)=0$. And $\mu$ and $\nu$ are \defi{singular} ($\mu \perp \nu$) if there exists $N \in \borel{E}$ such that $\mu(N)=0$ and $\nu(\comp{N})=0$. The \defi{\RN{} theorem} allows one to decompose a $\sigma$-finite measure with respect to an other. Let $\nu$, $\mu$ be two positive $\sigma$-finite measures on $E$, then there exists a unique pair of positive $\sigma$-finite measures $(\mu_1,\mu_2)$ such that $\mu=\mu_1+\mu_2$ with $\mu_1 \ll \nu$ and $\mu_2 \perp \nu$. Moreover, there exists a unique (up to a $\nu$-null set) $\nu$-measurable function $f : E \longrightarrow \reals^{+}$ such that, for all $A \in \borel{E}$, $\displaystyle \mu_1(A) = \int_A f \d \nu$. The function $f$ is denoted $\displaystyle \frac{\d \mu}{\d \nu}$ and called \RN{}-derivative of $\mu$ with respect to $\nu$ (or $\nu$-density).\par

\begin{definition}[Positive equivalence]
Let $\nu$, $\mu$ be two measures on $E$. $\nu$ and $\mu$ are \defi{positive equivalent} if $\nu \ll \mu$ and $\mu \ll \nu$. In that case, $\displaystyle \frac{\d \mu}{\d \nu} > 0$ and $\displaystyle \frac{\d \nu}{\d \mu}>0$, $\mu$-almost everywhere.
\end{definition}\par
 
Now, we give some properties of $\mu$-positive PCA and define the $\mu$-equivalence of PCA.\par

\begin{proposition} \label{prop:MuNuPos}
Let $\PCA{A}$ be a PCA. If $\PCA{A}$ is $\mu$ and $\nu$-positive, then $\mu$ and $\nu$ are positive equivalent or singular. 
\end{proposition}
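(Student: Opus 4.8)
The plan is to produce a single point $(a,b)\in E^2$ at which the conditional law $T(a,b;\cdot)$ is positive equivalent both to $\mu$ and to $\nu$; when no such point exists I will instead argue that $\mu^2$ and $\nu^2$ are singular and then transfer this singularity back down from the product space $E^2$ to $E$.

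Concretely, I would set $A=\{(a,b)\in E^2:\ T(a,b;\cdot)\text{ and }\mu\text{ are positive equivalent}\}$ and $B=\{(a,b)\in E^2:\ T(a,b;\cdot)\text{ and }\nu\text{ are positive equivalent}\}$. Since $\PCA{A}$ is $\mu$-positive, unfolding the definition of a $\mu$-positive transition kernel with $d=\card{N}=2$ gives a set $N_\mu\in\borel{E^2}$ with $\mu^2(N_\mu)=0$ and $A^c\subseteq N_\mu$; symmetrically $\nu$-positivity gives $N_\nu\in\borel{E^2}$ with $\nu^2(N_\nu)=0$ and $B^c\subseteq N_\nu$. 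If $A\cap B\neq\emptyset$, I pick $(a,b)$ in the intersection: then $\mu\ll T(a,b;\cdot)\ll\nu$ and $\nu\ll T(a,b;\cdot)\ll\mu$, so by transitivity of absolute continuity $\mu$ and $\nu$ are positive equivalent, which is the first alternative.

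It then remains to handle the case $A\cap B=\emptyset$. Here $A\subseteq B^c\subseteq N_\nu$, hence $N_\nu^c\subseteq A^c\subseteq N_\mu$, so $\mu^2(N_\nu^c)\leq\mu^2(N_\mu)=0$; together with $\nu^2(N_\nu)=0$ this shows $\mu^2\perp\nu^2$ (with witness set $N_\nu^c$). The remaining point is the implication $\mu^2\perp\nu^2\Rightarrow\mu\perp\nu$, which I would prove by contraposition: if $\mu$ is not singular with respect to $\nu$, use the Lebesgue decomposition $\mu=\mu_1+\mu_2$ with $\mu_1\ll\nu$ and $\mu_2\perp\nu$ (the Radon--Nikodym theorem recalled in Section~\ref{sec:prel}). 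Then $\mu_1\neq 0$, and $\mu_1$ is $\sigma$-finite as $\mu_1\leq\mu$, so $\mu_1\otimes\mu_1$ is a nonzero measure on $E^2$; a routine Fubini argument upgrades $\mu_1\ll\nu$ to $\mu_1\otimes\mu_1\ll\nu\otimes\nu=\nu^2$, while trivially $\mu_1\otimes\mu_1\ll\mu\otimes\mu=\mu^2$. For any $N$ with $\mu^2(N)=0$ and $\nu^2(N^c)=0$ we would then get $\mu_1\otimes\mu_1(N)=\mu_1\otimes\mu_1(N^c)=0$, i.e. $\mu_1\otimes\mu_1=0$, a contradiction; hence $\mu\perp\nu$, the second alternative.

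The only genuinely delicate step is this last implication, namely passing from a singularity statement on $E^2$ back to $E$: it is not immediate and goes through the Lebesgue decomposition together with the stability of absolute continuity under products. Everything else is bookkeeping with the ``almost everywhere'' clauses in the definition of $\mu$-positivity, and the symmetric roles of $\mu$ and $\nu$.
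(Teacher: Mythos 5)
Your proof is correct and follows essentially the same dichotomy as the paper: either some $T(a,b;\cdot)$ is positive equivalent to both $\mu$ and $\nu$, giving positive equivalence by transitivity, or the two sets of ``good'' pairs are disjoint, giving singularity via the $\mu^2$-null and $\nu^2$-null sets coming from $\mu$- and $\nu$-positivity. The only difference is that the paper exhibits the witness set $N=P_\nu\subset E^2$ and passes silently from the singularity of $\mu^2,\nu^2$ to that of $\mu,\nu$, whereas your Lebesgue-decomposition/Fubini argument makes that last step explicit --- a welcome precision, but not a different route.
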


\begin{proof}
Let $\PCA{A}$ be a PCA that is both $\mu$ and $\nu$-positive. If there exists $(a,b) \in E^2$ such that the measure $T(a,b;.)$ is both $\mu$ and $\nu$-positive then $\nu$ is $\mu$-positive by transitivity. Else, $P_{\mu} = \{ (a,b):~T(a,b;.) \text{ is $\mu$-positive}\}$ and $P_{\nu} = \{ (a,b):~T(a,b;.) \text{ is $\nu$-positive}\}$ are measurable and disjoint, and so taking $N=P_{\nu} \subset \comp{P_{\mu}}$, $\mu(N)=0$ and $\nu(\comp{N})=0$.
\end{proof}

The PCA $(\PCA{\tilde{G}}_{m,\sigma})$ (defined in Example~\ref{ex:GaussianPCAd}) are Lebesgue-positive; indeed, $\{(a,b):~T(a,b;.) \not\mskip-\thinmuskip\ll \text{Lebesgue-measure}\} = \{(a,a):~a \in \reals\}$ is Lebesgue-negligible in $\reals^2$. Moreover, for any $a \in \reals$, they are $\delta_a$-measurable because $T(a,a;.) = \delta_a$. One can verify that Prop~\ref{prop:MuNuPos} holds for these PCA because $\{\delta_a:~a \in \reals\}$ and the Lebesgue-measure are pairwise singular.
 
\begin{definition}[$\mu$-equivalent PCA] \label{def:muEquivalence}
Let $\PCA{A}$ and $\PCA{A'}$ be two $\mu$-supported PCA with respective t.k. $T$ and $T'$. $\PCA{A}$ and $\PCA{A'}$ are said to be \defi{$\mu$-equivalent} if the set where $T$ and $T'$ are not equal is a $\mu^2$-negligible set, i.e. $\mu^2\left(\{(a,b):~T(a,b;.) \neq T'(a,b;.)\}\right) = 0$.
\end{definition}

For any $(m,\sigma)$, the Gaussian PCA $\PCA{G}_{m,\sigma}$ (defined in Example~\ref{ex:GaussianPCA}) and the PCA $\PCA{\tilde{G}}_{m,\sigma}$ are Lebesgue-equivalent (their t.k. differs on $\{(a,a):~a \in \reals\}$, a Lebesgue-negligible set).

\begin{proposition}\label{prop:sameInv}
Let $\PCA{A}$ and $\PCA{A'}$ be two $\mu$-equivalent PCA and $(\rho_0,D,U)$ a $\mu$-supported HZMC. If $(\rho_0,D,U)$ is an invariant measure for $\PCA{A}$, then $(\rho_0,D,U)$ is also an invariant measure for $\PCA{A'}$.
\end{proposition}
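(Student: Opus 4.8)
The plan is to recast invariance as the density identity~(\ref{eq:brut}) and then observe that $\mu$-equivalence makes the only ingredient of that identity depending on the automaton — the $\mu$-density of its transition kernel — coincide $\mu$-almost everywhere. By Definition~\ref{def:muEquivalence} both $\PCA{A}$ and $\PCA{A'}$ are $\mu$-supported, and $(\rho_0,D,U)$ is a $\mu$-supported HZMC; hence, as recalled just before~(\ref{eq:brut}) (and as underlies Theorem~\ref{theo:Toom}), the $(\rho_0,D,U)$-HZMC is invariant by $\PCA{A}$ (resp.\ by $\PCA{A'}$) if and only if, for every $k\geq 0$ and for $\mu$-almost every tuple of ``free'' variables $b_0,\dots,b_{k+1},c_0,\dots,c_k$, identity~(\ref{eq:brut}) holds with $t$ (resp.\ with $t'$) the $\mu$-density of the transition kernel of $\PCA{A}$ (resp.\ of $\PCA{A'}$). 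It therefore suffices to show that if~(\ref{eq:brut}) holds $\mu$-almost everywhere with $t$, then it holds $\mu$-almost everywhere with $t'$.

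The first step is to translate $\mu$-equivalence into a statement about densities. By Definition~\ref{def:muEquivalence}, $\mu^2\big(\{(a,b):~T(a,b;.)\neq T'(a,b;.)\}\big)=0$; for $(a,b)$ outside this null set the two probability measures $T(a,b;.)$ and $T'(a,b;.)$ are equal, so their \RN{}-derivatives with respect to $\mu$ coincide $\mu$-almost everywhere, and Tonelli's theorem (using that $\mu$ is $\sigma$-finite) then gives that the set $Z:=\{(a,b,c):~t(a,b;c)\neq t'(a,b;c)\}$ is $\mu^3$-negligible.

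The second step is to feed $t=t'$ $\mu^3$-a.e.\ into~(\ref{eq:brut}). On its right-hand side the density occurs only through the factor $\prod_{i=0}^{k}t(b_i,b_{i+1};c_i)$, which depends on the free variables alone and not on the integration variables $a_0,\dots,a_{k+2}$, so it factors out of the integral; the remaining integral $\int_{E^{k+3}} r_0(a_0)\prod_{i=0}^{k+1} d(a_i;b_i)u(b_i;a_{i+1})\,\d\mu^{k+3}(a_0,\dots,a_{k+2})$ does not involve the automaton at all and is the same for $\PCA{A}$ and $\PCA{A'}$. Since $\mu^3(Z)=0$, for each $i$ the set of free tuples with $(b_i,b_{i+1},c_i)\in Z$ is $\mu^{2k+3}$-negligible (a null set in three coordinates times the whole space in the remaining ones), hence so is the finite union over $i=0,\dots,k$; outside it one has $\prod_{i=0}^{k}t(b_i,b_{i+1};c_i)=\prod_{i=0}^{k}t'(b_i,b_{i+1};c_i)$. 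Consequently the right-hand side of~(\ref{eq:brut}) for $\PCA{A'}$ agrees, for $\mu$-almost every free tuple, with that for $\PCA{A}$, while the left-hand side is untouched; so~(\ref{eq:brut}) with $t'$ holds $\mu$-almost everywhere, and by the characterization above the $(\rho_0,D,U)$-HZMC is invariant by $\PCA{A'}$.

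I do not expect a genuine obstacle; the point worth stressing is conceptual rather than technical. One must \emph{not} claim $\mathcal{F}_{\PCA{A}}=\mathcal{F}_{\PCA{A'}}$ as operators on $\mathcal{M}(E^{\mathbb{N}})$: this is false in general, and is precisely the phenomenon of a single PCA having several behaviors illustrated by Examples~\ref{ex:GaussianPCA} and~\ref{ex:GaussianPCAd}, where two Lebesgue-equivalent PCA coincide on all Lebesgue-supported initial distributions but behave differently on, e.g., the configuration $0^{\mathbb{N}}$. The two operators agree only on distributions that put no mass on the $\mu$-negligible set where $T$ and $T'$ differ — that is, on $\mu$-supported distributions — which is exactly why the $\mu$-supportedness of the HZMC is used and cannot be dropped; the rest is the routine Fubini/Tonelli bookkeeping sketched above.
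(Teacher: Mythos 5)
Your proof is correct and follows the same route as the paper, whose entire argument is the one-line observation that $\mu$-equivalence allows replacing $t$ by $t'$ in Eq~(\ref{eq:brut}); you simply make explicit the measure-theoretic bookkeeping (equality of $\mu$-densities $\mu^3$-almost everywhere via Tonelli, and the negligibility of the bad set of free variables) that the paper leaves implicit. Your closing caveat that the two PCA need not coincide as operators on all of $\mathcal{M}(E^{\mathbb{N}})$ is a correct and pertinent remark, but it is supplementary rather than a deviation from the paper's approach.
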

\begin{proof}
By property of $\mu$-equivalent PCA, we can change $t$ by $t'$ in Eq~(\ref{eq:brut}).
\end{proof}

Hence, sometimes, to find an invariant HZMC of a $\mu$-supported PCA $\PCA{A}$, the easiest way is to find a $\mu$-equivalent PCA $\PCA{A'}$ for which we already know a $\mu$-positive invariant HZMC. In particular, for a $\mu$-positive PCA $\PCA{A}$ for which Prop~\ref{prop:diag} does not apply, it could exist a $\mu$-equivalent PCA $\PCA{A'}$ for which this Proposition applies and gives a solution $\eta$ to Eq~(\ref{eq:pfff}). This Proposition gives some ``degrees of freedom'' on the ``rigid'' integral cubic equation Eq~(\ref{eq:pfff}). In Section~\ref{sec:GaussianPCA}, this Proposition will be used to prove that an invariant measure to $\PCA{\tilde{G}}_{m,\sigma}$ is an AR(1)~process.\par

\section{Examples} \label{sec:ex}
\begin{notation}
In this section, if $E$ is a finite set, then $\displaystyle \mu_E = \sum_{x \in E} \delta_x$ is the counting measure on $E$.\par
\end{notation}

Our first examples are PCA with finite alphabets. Then, we introduce two new models: Gaussian PCA and Beta PCA to illustrate our theorems. Finally, we present PCA with infinite alphabets that model existing problems in literature: one PCA models a synchronous TASEP on $\reals$ as defined by Blank~\cite{Blank12} and an other one a variant of directed first-passage percolation.\par

All PCA presented in this section are PCA on $\mathbb{N}$ (except the PCA modeling TASEP that is on $\mathbb{Z}$) and neighborhood $N(i)=(i,i+1)$.

\subsection{PCA with finite alphabet} \label{sec:PCAfa}
For positive PCA, see the first point of Remark~\ref{rem:positivePCA}.

In the following example, we focus on PCA that are not positive and take a PCA not $\mu_E$-positive, but $\mu_F$-positive for some $F$ subsets of $E$.\par

Let $\PCA{A}$ be the PCA with alphabet $E = \{0,1,2\}$ and t.k.:
\begin{itemize}
  \setlength{\itemsep}{1pt}
  \setlength{\parskip}{0pt}
  \setlength{\parsep}{0pt}
\item $T(0,i;i) = T(i,0;i) = 1$ for all $i \in \{0,1,2\}$,
\item $T(1,1;1) = T(1,1;2) = T(2,2;1) = T(2,2;2) = 1/2$,
\item $T(1,2;1) = T(2,1;2) = 4/5$,
\item $T(1,2;2) = T(2,1;1) = 1/5$.
\end{itemize}

This PCA is not positive ($T(0,1;0) = 0$), nevertheless it is $\mu_{\{0\}}$-positive ($T(0,0;.)=\mu_{\{0\}}(.)$) and, also, $\mu_{\{1,2\}}$-positive. These two measures are singular as ``predicted'' by Prop~\ref{prop:MuNuPos}.\par

Considered as a $\mu_{\{0\}}$-positive PCA, Theorem~\ref{theo:mainZ} and Prop~\ref{prop:diag} to $\PCA{A}$ imply that the constant (equals to $0$) HZMC is invariant by $\PCA{A}$.\par

Application of the same Theorem and same Lemma when $\PCA{A}$ is considered as a $\mu_{\{1,2\}}$-positive PCA gives: that \C~\ref{cond:Belyaev} holds because 
\begin{displaymath}
T(1,1;1) T(2,2;1) T(1,2;2) T(2,1;2) = T(2,2;2) T(1,1;2) T(2,1;1) T(1,2;1) = 1/25,
\end{displaymath}
for $(a_0,b_0,c_0)$ we take $(1,1,1)$; 
then we obtain for $\nu$ and $\eta$, as defined in Prop~\ref{prop:diag}, $\displaystyle \nu(1) = \nu(2) = 1/2$, $\displaystyle \eta(1) = 1/3$ and  $\displaystyle \eta(2) = 2/3$ and, so, taking $D^\eta$ and $U^\eta$ as defined in Eq~(\ref{eq:DUeta}): $\displaystyle D^\eta(1;1) = D^\eta(2;2) = 2/3$, $D^\eta(1;2) = D^\eta(2;1) = 1/3$, $U^\eta(1;1) = U^\eta(2;2) = 1/3$ and $U^\eta(1;2) = U^\eta(2;1) = 2/3$. Then, $\displaystyle \rho_0(1) = \rho_0(2) = 1/2$ is an invariant measure for the Markov chain of kernel $D^\eta$. Hence, the are two HZMC, supported by two singular measures ($\mu_{\{0\}}$ and $\mu_{\{1,2\}}$), invariant by $\PCA{A}$.

\subsubsection*{A $\mu$-supported PCA}
Let $\PCA{A}$ be the PCA with alphabet $E = \cyl{\kappa}$ with t.k. $T$ such that $T(a,b;.)$ is the uniform distribution on the circular interval set $\{a+1, \dots, b-1\}$ if $\module{a-b}>1$ and if $\module{a-b}\leq 1$, it is the uniform distribution on $E$.\par

This PCA is a $\mu_E$-supported PCA, but not $\mu$-positive for any measure $\mu$ on $E$. This PCA $\PCA{A}$ has an invariant $(\rho_0,D,U)$ HZMC with $D(a;a+1 \mod \kappa) = U(a;a+1 \mod \kappa) = 1$ for all $a \in \cyl{\kappa}$ and for any $a \in \cyl{\kappa}$, $\displaystyle \rho_0(\kappa) = \frac{1}{\kappa}$.

\subsection{Two new models of PCA with infinite alphabet}
\subsubsection{Gaussian PCA} \label{sec:GaussianPCA}
\begin{notation}
In the following, for any two positive parameters $m$ and $\sigma$, the Lebesgue-density of the Gaussian distribution of mean $m$ and variance $\sigma^2$ will be denoted
\begin{displaymath}
g[m,\sigma](x) = \frac{1}{\sqrt{2 \pi \sigma^2}} \exp \left( - \frac{(x-m)^2}{2 \sigma^2} \right).
\end{displaymath}
\end{notation}

In this section, we apply Theorem~\ref{theo:main} and Prop~\ref{prop:diag} to prove that an AR(1)~process is an invariant distribution for Gaussian PCA $\PCA{G}_{m,\sigma}$ (defined in Example~\ref{ex:GaussianPCA}). Then, we prove the same property for PCA $\PCA{\tilde{G}}_{m,\sigma}$ (defined in Example~\ref{ex:GaussianPCAd}) by an application of Prop~\ref{prop:sameInv}.

\paragraph{Gaussian PCA $\PCA{G}_{m,\sigma}$.} For $\PCA{G}_{m,\sigma}$, it can be checked that \C~\ref{cond:Belyaev} holds for any triplet $(a_0,b_0,c_0)$ in $\reals^3$, so let us choose $(a_0,b_0,c_0) = (0,0,0)$. We use Prop~\ref{prop:diag} to obtain a function $\eta$. The first step consists in studying the eigenfunctions of
\begin{displaymath}
\app{\mathcal{A}_1}{L^1}{L^1}{f}{\mathcal{A}_1(f):c \mapsto \displaystyle \int_\reals f(a) \  g\left[\frac{2}{m}a,\sigma\right](c) \ \d a}.
\end{displaymath}
The function $\displaystyle \nu(x) = \exp \left( - \frac{1}{2\sigma^2} \left(1 - \frac{4}{m^2}\right) x^2 \right)$ is a positive eigenfunction of $\mathcal{A}_1$. Moreover, we need $\nu$ to be in $L^1$, hence $\displaystyle 1 - \frac{4}{m^2}$ must be positive and, so, we need $\module{m}>2$. Without this condition, for any $i$, the function $t \to \text{Var}\left(S(i,t)\right)$ increases and goes to infinity with $t$. 
When $\module{m}>2$, we can go further with Prop~\ref{prop:diag} and study the eigenfunctions of
\begin{displaymath}
\app{\mathcal{A}_2}{L^1}{L^1}{f}{\mathcal{A}_2(f) : a \mapsto \displaystyle \int_\reals f(b) \frac{t(a,a;0)\nu(a)}{t(a,b;0)} \d b}
\end{displaymath}
with $\displaystyle \frac{t(a,a;0)\nu(a)}{t(a,b;0)} = \exp \left( -\frac{b^2}{2 \sigma^2} \right) \exp \left( \frac{\left(\frac{a+b}{m}\right)^2}{2\sigma^2} \right)$. One can check that the function 
\begin{displaymath}
\eta(x) = \exp \left(-  \frac{1}{4\sigma^2} \left( 1 + \sqrt{1 - \frac{4}{m^2}} \right) x^2 \right)
\end{displaymath}
is a positive eigenfunction of $\mathcal{A}_2$  associated to the eigenvalue $\displaystyle \frac{\sqrt{\pi \sigma^2}}{\left( 1 + \sqrt{1-\frac{4}{m^2}} \right)^2}$. Moreover $\eta$ satisfies Eq~(\ref{eq:pfff}) (this is an example where Prop~\ref{prop:diag} permits to compute a solution $\eta$ to Eq~(\ref{eq:pfff})). We get
\begin{equation}\label{eq:dGauss}
d^\eta(a;c) = g\left[\frac{2}{ml} \, a , \sqrt{\frac{2}{l}} \, \sigma \right](c)
\end{equation}
and
\begin{equation}\label{eq:uGauss}
u^\eta(c;b) =  g\left[\frac{2}{ml} \, c , \sqrt{\frac{2}{l}} \, \sigma \right](b)
\end{equation}
for $\displaystyle l = 1 + \sqrt{1 - \frac{4}{m^2}}$. To end, we need to find an invariant probability distribution $\rho_0$ for the Markov chain of t.k. $D^\eta$ (of Lebesgue-density $d^\eta$). The measure $\rho_0$ with the following Lebesgue-density $r_0$ is fine:
\begin{equation} \label{eq:rGauss}
r_0(x) = g\left[0, \left( 1 - \frac{4}{m^2} \right)^{-1/4} \sigma \right](x)
\end{equation}
\par

This permits to conclude that the $(\rho_0,D^\eta,U^\eta)$-HZMC is an invariant measure for the Gaussian PCA.\par

In fact, this invariant HZMC is an autoregressive process of order $1$ (AR(1)~process, see~\cite{West97}) that is a process $(X_i)$ such that $X_i = \theta + \phi X_{i-1} + \epsilon_i$ where $\theta$ and $\phi$ are two real numbers and $\left( \epsilon_i \right)$ are independent and identically distributed of law the Gaussian law $\mathcal{N}(0,\sigma'^2)$. In our case, the invariant HZMC is an AR(1)~process on $\HZ_{\mathbb{N}}$ with $\theta = 0$, $\phi = \displaystyle \frac{2}{ml}$ and $\displaystyle \sigma'^2 = \frac{2 \sigma^2}{l}$.

\paragraph*{``Gaussian PCA except on diagonal'' $\PCA{\tilde{G}}_{m,\sigma}$.}
As already seen in Section~\ref{sec:prel}, this PCA is Lebesgue-positive and also $\mu_{\{a\}}$-positive for any $a \in \reals$.\par

When we consider $\PCA{\tilde{G}}_{m,\sigma}$ as a Lebesgue-positive PCA, Prop~\ref{prop:diag} could not be used to find a solution $\eta$ to Eq~(\ref{eq:pfff}). Hopefully, $\PCA{\tilde{G}}_{m,\sigma}$ is Lebesgue-equivalent to $\PCA{G}_{m,\sigma}$. Hence, by Prop~\ref{prop:sameInv}, the invariant Lebesgue-positive $(\rho_0,D^\eta,U^\eta)$-HZMC, that corresponds to an AR(1)~process, obtained for $\PCA{G}_{m,\sigma}$ is also invariant for $\PCA{\tilde{G}}_{m,\sigma}$.\par
Besides, for any $a \in \reals$, the constant process equal to $a$ everywhere is also an invariant measure to $\PCA{\tilde{G}}_{m,\sigma}$.

\subsubsection{Beta PCA}\label{sec:BetaPCA}
We define a class of PCA with alphabet $\reals$ depending on three positive real parameters $\alpha$, $\beta$ and $m$. The t.k. is the following: for all $a,b \in \reals$ and $C \in \borel{\reals}$,
\begin{displaymath}
T(a,b;C) = \prob{(b-a) X + a - m \in C} 
\end{displaymath} 
where $X$ is a $\text{Beta}(\alpha,\beta)$ random variable, i.e. the Lebesgue-density of $T$ is, for $\mu$-almost $a,b,c$,
\begin{displaymath}
t(a,b;c) = \frac{\displaystyle \left( \frac{c+m-a}{b-a} \right)^{\alpha-1} \left(\frac{b-c-m}{b-a} \right)^{\beta-1} }{B(\alpha,\beta)} \indic{0 \leq \frac{c + m - a}{b-a} \leq 1}
\end{displaymath}
where $B$ is the beta function. In words, the PCA takes a random (following a Beta law) number between the two values of its two neighbors and subtract $m$ to it.\par

This PCA is Lebesgue-supported, but not Lebesgue-positive.\par

Now, try to search an invariant $(\rho_0,D,U)$-HZMC to this PCA. Let $\theta$ be a positive real number. Let $D_1(a;C) = \prob{X_1 + a - m \in C}$ and $U_1(c;B) = \prob{X_2 + c + m \in B}$ where $X_1$ (resp. $X_2$) is a $\text{Gamma}(\alpha,\theta)$ (resp. $\text{Gamma}(\beta,\theta)$) random variable.
For $D = D_1$ and $U = U_1$, \C~\ref{cond:Toom} and \C~\ref{cond:commut} hold; unfortunately, there does not exist a probability distribution $\rho_0$ that satisfies \C~\ref{cond:invD}. Hence, this PCA does not possess a Lebesgue-supported HZMC as invariant distribution. Nevertheless, the image of a Lebesgue-supported $(\rho,D_1,U_1)$-HZMC by this PCA is the $(\rho D_1,D_1,U_1)$-HZMC (meaning that one can describe simply the distribution of the successive image of a $(\rho,D_1,U_1)$-HZMC by $A$).

\subsection{PCA with infinite alphabet in the literature} \label{sec:PCAlit}
\subsubsection*{PCA modeling TASEP}
We model the synchronous TASEP on $\reals$ introduced by Blank~\cite{Blank12} by a PCA on $\mathbb{Z}$ with alphabet $\reals$. In the following, when we say TASEP, we refer to this variant of TASEP.

TASEP models the behavior of an infinite number of particles of radius $r \geq 0$ on the real line, that move to the right direction, that do not bypass, not overlap and, at each step of time, each particle moves with probability $p$ ($0 < p \leq 1$), independently of each others. When a particle moves, it travels a distance $v \geq 0$ to the right direction, except if it can create a collision with the next particle, in that case, it moves to the rightest allowed position. Formally, the evolution of $(x_i^t)$ is the following:
\begin{displaymath}
x_i^{t+1} = \begin{cases} \min(x_i^t +v , x_{i+1}^t -2r) & \text{with probability } p, \\
 x_i^t & \text{with probability } 1-p.
\end{cases}
\end{displaymath}

We propose, here, to model this TASEP by a PCA $\PCA{A}$ on $\mathbb{Z}$ with alphabet $\reals$. In this model, the state of a cell $i$ at time $t$ is the position $x_i^t$ of the $i$th particle of the TASEP at time $t$. Hence, the t.k. of the PCA is the following: for any $a,b \in \reals$ such that $a + r \leq b - r$ and for any $C \in \borel{\reals}$,
\begin{displaymath} 
T(a,b;C) = \begin{cases} 
(1-p) \delta_a(C) + p \delta_{a+v}(C) & \text{if } a+v \leq b-2r,\\
(1-p) \delta_a(C) + p \delta_{b-2r}(C) & \text{if } a+v > b-2r.
\end{cases}
\end{displaymath}
The t.k. for other pairs $(a,b)$ is not specified since they concern forbidden configurations. Hence, if we start with an admissible configuration at time $0$ for the PCA (i.e. for any $i \in \mathbb{Z}$, $S(i,0) + r \leq S(i+1,0) - r$), then the PCA models the TASEP.\par

We can remark that if, at some time $t$, $v=2kr$ for some $k \in \mathbb{N}$, and, for any $i$, $x_i(t) \in 2r \mathbb{Z}$, then at time $t+1$ this is also the case. In terms of PCA, this says that the PCA $\PCA{A}$ is $\mu$-supported by $\mu = \sum_{i \in \mathbb{Z}} \delta_{2ri}$. For this measure, one can check that the $(R,D,U)$-$\text{HZMC}_{\mathbb{Z}}$ (HZMC on $\mathbb{Z}$ are defined in Section~\ref{sec:PCAonZ}) where $R_i = \delta_{2ri}$, $D(a;a) = 1$ and $U(a;a+2r)=1$ (i.e. the states of the $\text{HZMC}_{\mathbb{Z}}$ are $S(i,t)=S(i,t+1)=2ri$ for all $i \in \mathbb{Z}$) is an invariant $\text{HZMC}_{\mathbb{Z}}$ for the PCA. But, it is quite an uninteresting invariant measure because it corresponds to a trivial configuration where nobody can move.\par

\subsubsection*{PCA modeling a variant of first-passage percolation}
We propose a model of a directed first-passage percolation $P$ on a directed graph $\mathcal{G}$ which can also be seen as a PCA with alphabet $[0,\infty)$. We use the same notation as Kesten~\cite{Kesten87} to present the classical model of first-passage percolation.\par

The set of nodes of $\mathcal{G}$ is $\mathcal{N} = \{(i,j):~i,j \in \mathbb{N}\}$, the discrete quarter plan, and the set of directed edges is $\mathcal{E} = \{((i,j),(i,j+1)):~i,j \in \mathbb{N} \} \cup \{((i+1,j),(i,j+1)):~i,j \in \mathbb{N}\}$. We denote $L_0$ the set of the nodes of the first line $L_0 = \{(i,0):~i \in \mathbb{N}\}$. Now, assign to each edge $e \in \mathcal{E}$ a random non-negative weight $t(e)$ which could be interpreted as the time needed to pass through the edge $e$. We assume that $\left( t(e):~e \in \mathcal{E} \right)$ are i.i.d. with common distribution $F$. The passage time of a directed path $r=(e_1,\dots,e_n)$ on $G$ is $T(r) = \displaystyle \sum_{i=1}^n t(e_i)$. The travel time from a node $u$ to a node $v$ is defined as $T(u,v) = \inf \{ T(r):~r \text{ is a directed path from $u$ to $v$} \}$. If there is no directed path from $u$ to $v$, $T(u,v) = \infty$. We define the travel time from a set of nodes $U$ to a node $v$ by $T(U,v) = \inf \{T(u,v):~u \in U\}$. Finally, we define $\mathcal{V}(t) = \{ v \in \mathcal{N}:~T(L_0,v) \leq t \}$ the set of visited nodes at time $t$. The object of study in the first-passage percolation is this set $\mathcal{V}(t)$.\par

The first-passage percolation $P$ on $\mathcal{G}$ can be seen as a PCA $\PCA{A}$ on $\mathbb{N}$ with alphabet $[0,\infty)$ as follows: let $S(i,j)$ represents the travel time $T(L_0,(i,j))$ from $L_0$ to the node $(i,j)$ in the first-passage percolation. Hence, the t.k. of the PCA is the following: for any $a,b \in [0,\infty)$, for any $C \in \mathcal{B}([0,\infty))$,
\begin{displaymath} 
T(a,b;C) = L_{a,b}(C)
\end{displaymath}
where $L_{a,b}$ is the distribution of the random variable $X = \min \{(a+T_1),(b+T_2)\}$ where $T_1$ and $T_2$ are i.i.d. with common law $F$.\par

Unfortunately, our work does not apply to these examples.

\section{Proofs of the main results} \label{sec:proof}
\subsection{Proof of Theorem~\ref{theo:Toom}} \label{sec:proofToom}
First: let $(\rho_0,D,U)$ be a $\mu$-supported HZMC invariant by $\PCA{A}$ with t.k. $T$, a $\mu$-supported PCA. For all $A,B,C \in \borel{E}$, for all $i \in \mathbb{N}$,
\begin{eqnarray*}
\prob{S(i,t) \in A, S(i+1,t) \in B, S(i,t+1) \in C} 
&=&\int_{A \times B \times C} r_i(a) d(a;c) u(c;b) \d \mu^3(a,b,c)\label{eq:1} \\ 
&=&\int_{A \times B \times C} r_i(a) \truc{du}(a;b) t(a,b;c) \d \mu^3(a,b,c) \label{eq:2}
\end{eqnarray*}
where $\rho_i$ is the law of cell $i$ of $\mu$-density $r_i$.
Taking the difference, we obtain, for all $A,B,C \in \borel{E}$,
\begin{displaymath}
\int_{A \times B \times C} \Big( r_i(a) d(a;c) u(c;b) -  r_i(a) \truc{du}(a;b) t(a,b;c) \Big) \d \mu^3(a,b,c)= 0.
\end{displaymath}
Hence, since this holds for any Borel set $A \times B \times C$, $r_i(a) d(a;c) u(c;b) = r_i(a) \truc{du}(a;b) t(a,b;c)$ for $\mu^3$-almost $(a,b,c) \in E^3$. If $a \in \tilde{E}$, there exists $i$ such that $r_i(a) > 0$ a.s. and, then, \C~\ref{cond:Toom} holds.

We have also, for all $A,B \in \borel{E}$, on one hand, 
\begin{eqnarray*}
\prob{S(i,t+1) \in A , S(i+1,t+1) \in B} & = & \prob{S(i,t+1) \in A , S(i+1,t+1) \in B, S(i+1,t) \in E} \\
& = & \displaystyle \int_{A \times B} r_i(a) \truc{ud}(a;b) \d \mu^2(a,b)
\end{eqnarray*}
because $\left( S(0,t),S(0,t+1),S(1,t), \dots \right)$ is a $(\rho_0,D,U)$-HZMC and, on the other hand,
\begin{eqnarray*}
\prob{S(i,t+1) \in A , S(i+1,t+1) \in B} & = & \prob{S(i,t+1) \in A , S(i,t+1) \in B, S(i,t+2) \in E} \\
& = & \displaystyle \int_{A \times B} r_i(a) \truc{du}(a;b) \d \mu^2(a,b)
\end{eqnarray*}
because $\left( S(0,t+1), S(0,t+2), S(1,t+1), \dots \right)$ is also a $(\rho_0,D,U)$-HZMC due to its invariance by $\PCA{A}$. Then, as before, $r_i(a) \truc{ud}(a;b) = r_i(a) \truc{du}(a;b)$ for $\mu^2$-almost $(a,b) \in E^2$ and, so, \C~\ref{cond:commut} holds.

Moreover, the law of $S(0,t)$ and $S(0,t+1)$ must be the same because $(\rho_0,D,U)$ is invariant by the PCA. Hence, the law of $S(0,t+1)$ of $\mu$-density $\displaystyle \int_E r_0(a) d(a;c) \d \mu(a)$ must be equal to $\rho_0$ of $\mu$-density $r_0(c)$, i.e. \C~\ref{cond:invD} holds.

Conversely, suppose that \C~\ref{cond:Toom}, \C~\ref{cond:commut} and \C~\ref{cond:invD} are satisfied. Suppose that the horizontal zigzag $\HZ_{\mathbb{N}}(t)$ is distributed as a $(\rho_0,D,U)$-HZMC. Now, compute the push forward measure of this HZMC by $\PCA{A}$. For any $n\geq 0$, for any $F_{2n+1} = B_0 \times \dots \times B_{n+1} \times C_0 \times \dots \times C_n \in \borel{E}^{2n+1}$.
\begin{eqnarray*}
\lefteqn{
\prob{S(0,t+1) \in B_0, S(0,t+2) \in C_0, \dots, S(n+1,t+1) \in B_{n+1}}} \\
& = & \int_{E^{n+2} \times F_{2n+1}} r_0(a_0) \prod_{i=0}^{n+1}d(a_i;b_i) u(b_i;a_{i+1})t(b_i,b_{i+1};c_i)\\
& & \qquad \qquad \qquad \qquad \qquad \qquad \d \mu^{3n+6}(a_0,\dots,a_{n+2},b_0,\dots,b_{n+1},c_0,\dots,c_n) \\
& = & \int_{F_{2n+1}} \left(\int_{E} r_0(a_0) d(a_0;b_0) \d \mu(a_0) \right) \prod_{i=0}^{n} \left( \int_{E} u(b_i;a_{i+1}) d(a_{i+1};b_{i+1}) \d \mu(a_{i+1}) \right)  \nonumber \\
& & \qquad \left(\int_{E} u(b_{n+1};a_{n+2}) \d \mu(a_{n+2}) \right) \prod_{i=0}^{n} t(b_i,b_{i+1};c_i) \d \mu^{2n+3}(b_0,\dots,b_{n+1},c_0,\dots,c_n) \\
& = & \int_{F_{2n+1}} r_0(b_0) \prod_{i=0}^{n} \truc{ud}(b_i;b_{i+1}) t(b_i,b_{i+1};c_i) \d \mu^{2n+3} (b_0,\dots,b_{n+1},c_0,\dots,c_n) \\
& = & \int_{F_{2n+1}} r_0(b_0) \prod_{i=0}^{n} d(b_i;c_i) u(c_i;b_{i+1}) \d \mu^{2n+3}(b_0,\dots,b_{n+1},c_0,\dots,c_n).
\end{eqnarray*}

This shows that the push forward measure of a $(\rho_0,D,U)$-HZMC is a $(\rho_0,D,U)$-HZMC. Hence, the $(\rho_0,D,U)$-HZMC is an invariant measure of $\PCA{A}$. 

\subsection{Proof of Theorem~\ref{theo:main}}
In the case of a $\mu$-positive HZMC, taking $\tilde{E}$ or $E$ does not make any difference in Theorem~\ref{theo:Toom}. Indeed, by basic properties of measurability: for any property $P$, $P(x)$ holds for $\mu$-almost $x \in E$ if and only if $P(x)$ holds for $\mu$-almost $x \in \text{supp}(\mu) \cap E$ (set equal to $\tilde{E}$ here). In addition, for a $\mu$-positive $(\rho_0,D,U)$-HZMC: for $\mu^2$-almost $(a,b) \in E^2$, $\truc{du}(a,b) > 0$.\par

To prove Theorem~\ref{theo:main}, we first prove Lemmas~\ref{lem:first} and~\ref{lem:second}.

\begin{lemme} \label{lem:first}
Let $\PCA{A}$ be a $\mu$-positive PCA with t.k. $T$. The three conditions: \C~\ref{cond:Toom}, \C~\ref{cond:Belyaev} and
 
\begin{cond}\label{cond:BelyaevGen}
For $\mu^6$-almost $(a,a',b,b',c,c')$,
\begin{equation} \label{eq:BelyaevGenProof}
t(a,b;c) t(a,b';c') t(a',b;c') t(a',b';c) = t(a',b';c') t(a',b;c) t(a,b';c) t(a,b;c').
\end{equation}
\end{cond}
are equivalent.
\end{lemme}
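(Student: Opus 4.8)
The plan is to prove the three-way equivalence by establishing the chain \C~\ref{cond:Toom} $\Rightarrow$ \C~\ref{cond:BelyaevGen} $\Rightarrow$ \C~\ref{cond:Belyaev} $\Rightarrow$ \C~\ref{cond:Toom}, using throughout that $\mu$-positivity of $\PCA{A}$ guarantees, via Theorem~\ref{theo:Toom} and the remarks at the start of this subsection, that $t(a,b;c)>0$ for $\mu^3$-almost $(a,b,c)$, so that all the denominators appearing below are legitimate. Note that \C~\ref{cond:Belyaev} is the special case of \C~\ref{cond:BelyaevGen} obtained by fixing $(a',b',c')=(a_0,b_0,c_0)$, once we know $T(a_0,b_0;.)$ is positive equivalent to $\mu$; so \C~\ref{cond:BelyaevGen} $\Rightarrow$ \C~\ref{cond:Belyaev} is immediate from Fubini (a property holding $\mu^6$-a.e.\ holds on $\mu^3\otimes\{(a_0,b_0,c_0)\}$ for $\mu^3$-almost $(a,b,c)$, for $\mu^3$-almost $(a_0,b_0,c_0)$, and we may choose such a triple with the positivity property since $\mu$-positivity of $T$ means $\mu^2$-almost every $(a_0,b_0)$ has $T(a_0,b_0;.)$ positive equivalent to $\mu$).

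For \C~\ref{cond:Toom} $\Rightarrow$ \C~\ref{cond:BelyaevGen}: \C~\ref{cond:Toom} says that for $\mu^3$-almost $(a,b,c)$ we have $t(a,b;c)=\dfrac{d(a;c)\,u(c;b)}{\truc{du}(a;b)}$, i.e.\ $t(a,b;c)$ factorizes as $g(a;c)\,h(c;b)$ for the $\mu^2$-measurable functions $g(a;c)=d(a;c)$ and $h(c;b)=u(c;b)/\truc{du}(a;b)$ — more precisely, after using $\mu$-positivity to see $\truc{du}>0$ a.e., one checks that $t(a,b;c)$ admits a separated form. Plugging this separated form into both sides of Eq~(\ref{eq:BelyaevGenProof}): the left side becomes $g(a;c)h(c;b)\cdot g(a;c')h(c';b')\cdot g(a';c')h(c';b)\cdot g(a';c)h(c';b')$ — wait, I must be careful with which variables pair up — the point is that each of $a,a',b,b',c,c'$ appears exactly twice on each side with matching "slot" structure, so both products equal $g(a;c)g(a;c')g(a';c)g(a';c')h(c;b)h(c;b')h(c';b)h(c';b')$ after reordering, hence they are equal $\mu^6$-a.e. (Strictly, one writes $t(a,b;c)=d(a;c)u(c;b)/\truc{du}(a;b)$ and notes the $\truc{du}$ factors also cancel between the two sides since the same four pairs $(a,b),(a,b'),(a',b),(a',b')$ occur on each side.)

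For the converse direction \C~\ref{cond:Belyaev} $\Rightarrow$ \C~\ref{cond:Toom}: this is the substantive step and the main obstacle. Starting from Eq~(\ref{eq:Belyaev}) and the positivity of $t$, I would define $d^\eta$ and $u^\eta$ via Eq~(\ref{eq:DUeta}) with the fixed triple $(a_0,b_0,c_0)$ — or more directly extract from \C~\ref{cond:Belyaev} a separated representation $t(a,b;c)=\alpha(a;c)\,\beta(b;c)\,\gamma(a;b)^{-1}$-type formula by solving for $t(a,b;c)$: rearranging Eq~(\ref{eq:Belyaev}) gives
\begin{displaymath}
t(a,b;c)=\frac{t(a,b;c_0)\,t(a,b_0;c)\,t(a_0,b;c)}{t(a_0,b_0;c)\,t(a_0,b;c_0)\,t(a,b_0;c_0)}\cdot t(a_0,b_0;c_0),
\end{displaymath}
so $t(a,b;c)=\phi_1(a;c)\,\phi_2(b;c)\,\phi_3(a;b)$ with $\phi_1(a;c)=t(a,b_0;c)/t(a,b_0;c_0)$, $\phi_2(b;c)=t(a_0,b;c)/t(a_0,b;c_0)$, $\phi_3(a;b)=t(a,b;c_0)\,t(a_0,b_0;c_0)/(t(a_0,b;c_0)\,t(a,b_0;c_0))$, all positive $\mu$-a.e.\ and measurable. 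Then I would verify directly that with $d(a;c):=\phi_1(a;c)/\big(\int_E\phi_1(a;x)\phi_2(b_0?;x)\cdots\big)$ — i.e.\ one normalizes the separated factors to get genuine kernels and checks that $t(a,b;c)\truc{du}(a;b)=d(a;c)u(c;b)$ holds with these choices, which is a direct computation: $\truc{du}(a;b)=\int_E d(a;c')u(c';b)\,\d\mu(c')$ will, by the separated form, be exactly the normalizing factor making $d(a;c)u(c;b)/\truc{du}(a;b)=t(a,b;c)$. The delicate point throughout is measurability and the handling of $\mu$-null sets when passing between statements quantified "$\mu^3$-a.e." and "$\mu^6$-a.e." — one must invoke Fubini repeatedly and take a single common full-measure set on which all the pointwise identities hold simultaneously — and the fact, emphasized in the paper, that one cannot simply "set $a=b$" in these almost-everywhere identities, so the separation argument must be carried out entirely within the $\mu^2$- or $\mu^3$-a.e.\ framework without ever evaluating on diagonals.
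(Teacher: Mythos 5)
Your implications \C~\ref{cond:Toom} $\Rightarrow$ \C~\ref{cond:BelyaevGen} and \C~\ref{cond:BelyaevGen} $\Rightarrow$ \C~\ref{cond:Belyaev} are correct and are essentially the paper's own arguments (substitute $t=d\,u/\truc{du}$ and observe that the same four factors $\truc{du}(a;b)$, $\truc{du}(a;b')$, $\truc{du}(a';b)$, $\truc{du}(a';b')$ and the same $d$- and $u$-factors appear on both sides; then Fubini plus the fact that $\mu^2$-almost every $(a_0,b_0)$ has $T(a_0,b_0;.)$ positive equivalent to $\mu$ — the paper phrases this second step as a contrapositive, but it is the same computation).

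The gap is in the direction you yourself single out as the substantive one, \C~\ref{cond:Belyaev} $\Rightarrow$ \C~\ref{cond:Toom}. Two problems. First, an algebra slip: your displayed rearrangement of Eq~(\ref{eq:Belyaev}) is correct, but the explicit factors you extract from it do not multiply back to $t(a,b;c)$: with $\phi_1(a;c)=t(a,b_0;c)/t(a,b_0;c_0)$, $\phi_2(b;c)=t(a_0,b;c)/t(a_0,b;c_0)$ and your $\phi_3$, the product $\phi_1\phi_2\phi_3$ differs from $t(a,b;c)$ by the factor $t(a,b_0;c_0)\,t(a_0,b;c_0)/t(a_0,b_0;c)$ (the terms $t(a,b_0;c_0)$ and $t(a_0,b;c_0)$ are counted twice and the factor $1/t(a_0,b_0;c)$ is missing); a correct split is, e.g., $\phi_1(a;c)=t(a,b_0;c)/t(a_0,b_0;c)$, $\phi_2(b;c)=t(a_0,b;c)$, $\phi_3(a;b)=t(a,b;c_0)\,t(a_0,b_0;c_0)/\bigl(t(a_0,b;c_0)\,t(a,b_0;c_0)\bigr)$. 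Second, and more importantly, the construction of $(d,u)$ from the separated form is left unfinished (the ``$\phi_2(b_0?;x)\cdots$''), and your closing assertion that $\truc{du}(a;b)$ ``will, by the separated form, be exactly the normalizing factor'' is not automatic: if you normalize the two factors separately, $d(a;c)=\phi_1(a;c)/\int_E\phi_1(a;x)\,\d\mu(x)$ and $u(c;b)=\phi_2(b;c)/\int_E\phi_2(x;c)\,\d\mu(x)$, then $d(a;c)u(c;b)/t(a,b;c)$ still carries the $c$-dependent factor $1/\int_E\phi_2(x;c)\,\d\mu(x)$, so it cannot equal the $c$-free quantity $\truc{du}(a;b)$, and \C~\ref{cond:Toom} fails for that choice. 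The missing idea is to re-inject the $c$-dependent normalizer of $u$ into $d$: take $u(c;b)=t(a_0,b;c)/\int_E t(a_0,x;c)\,\d\mu(x)$ and $d(a;c)=K_a\,\frac{t(a,b_0;c)}{t(a_0,b_0;c)}\int_E t(a_0,x;c)\,\d\mu(x)$ with $K_a$ normalizing in $c$ (this is, up to presentation, exactly the paper's pair). Then $d(a;c)u(c;b)/t(a,b;c)$ is independent of $c$ by \C~\ref{cond:Belyaev}, and integrating over $c$ and using $\int_E t(a,b;c)\,\d\mu(c)=1$ identifies this constant with $\truc{du}(a;b)$, which is \C~\ref{cond:Toom}. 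Without specifying such a choice and carrying out this computation, the key step of the lemma is asserted rather than proved.
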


\begin{proof}
\begin{itemize}
\item From \C~\ref{cond:Toom} to \C~\ref{cond:BelyaevGen}: replace in \C~\ref{cond:BelyaevGen} the expressions of $t$ by the ones given in \C~\ref{cond:Toom}.

\item From \C~\ref{cond:BelyaevGen} to \C~\ref{cond:Belyaev}: we prove its contrapositive. Suppose that, for all $(a_0,b_0,c_0)$, \C~\ref{cond:Belyaev} is false. Hence, for all $(a_0,b_0,c_0) \in E^3$, either $T(a_0,b_0,.)$ and $\mu$ are not positive equivalent, or
\begin{equation} \label{eq:BelyaevProofPos}
\mu^3\left(\{(a,b,c) \text{ such that Eq~(\ref{eq:Belyaev}) does not hold}\}\right)>0.
\end{equation} 
But, by definition of $\mu$-positivity, the set of $(a_0,b_0)$ such that $T(a_0,b_0;.)$ and $\mu$ are not positive equivalent is $\mu^2$-negligible. Hence, for $\mu^3$-almost $(a_0,b_0,c_0)$, Eq~(\ref{eq:BelyaevProofPos}) holds. But, by Fubini theorem,
\begin{eqnarray*}
&&\mu^6(\{(a,b,c,a',b',c') \text{ such that Eq~(\ref{eq:BelyaevGenProof}) does not hold}\}) \\
&& = \int_{E^3} \mu^3\left(\{(a,b,c) \text{ such that Eq~(\ref{eq:Belyaev}) does not hold}\}\right) \d \mu(a_0,b_0,c_0)> 0
\end{eqnarray*}
and on the other hand \C~\ref{cond:BelyaevGen} is equivalent to
\begin{displaymath}
\mu^6(\{(a,b,c,a',b',c') \text{ such that Eq~(\ref{eq:BelyaevGenProof}) does not hold}\}) = 0.
\end{displaymath}

\item From \C~\ref{cond:Belyaev} to \C~\ref{cond:Toom}: set
\begin{displaymath}
  \displaystyle d(a;c) =  K_a \frac{t(a,b_0;c)}{t(a_0,b_0;c)} \frac{\displaystyle \int_E t(a_0,b;c) \d \mu(b)}{\displaystyle \int_E \frac{t(a,b_0;x)}{t(a_0,b_0;x)} \d \mu(x)} \text{ and } u(c;b) = \frac{t(a_0,b;c)}{\displaystyle \int_E t(a_0,x;c) \d \mu(x)}
\end{displaymath}
where $K_a$ is a normalization constant such that $\displaystyle \int_E d(a;c) \d \mu(c) = 1$. Then,
\begin{displaymath}
\truc{du}(a;b) = K_a \int_E \frac{t(a,b_0;c) t(a_0,b;c)}{t(a_0,b_0;c)} \frac{1}{\displaystyle\int_E \frac{t(a,b_0;x)}{t(a_0,b_0;x)} \d \mu(x)} \d \mu(c),
\end{displaymath}
and
\begin{eqnarray}
\frac{d(a;c) u(c;b)}{\truc{du}(a;b)} &=& \frac{t(a,b_0;c) t(a_0,b;c)}{t(a_0,b_0;c)} \frac{1}{\displaystyle \int_E \frac{t(a,b_0;x) t(a_0,b;x)}{t(a_0,b_0;x)} \d \mu(x)} \label{eq:line1} \\
& = & \frac{t(a,b_0;c) t(a_0,b;c)}{t(a_0,b_0;c)} \frac{1}{\displaystyle \int_E t(a,b;x) \frac{t(a,b_0;c_0) t(a_0,b;c_0)}{t(a_0,b_0;c_0) t(a,b;c_0)} \d \mu(x)} \label{eq:line2} \\
& = & \frac{t(a,b_0;c) t(a_0,b;c) t(a_0,b_0;c_0) t(a,b;c_0)}{t(a_0,b_0;c) t(a,b_0;c_0) t(a_0,b;c_0)} \frac{1}{\displaystyle \int_E t(a,b;x) \d \mu(x)} \label{eq:line3} \\
& = & t(a,b;c) \label{eq:line4}.
\end{eqnarray}
In this previous computation, we pass from line~(\ref{eq:line1}) to line~(\ref{eq:line2}) and from line~(\ref{eq:line3}) to line~(\ref{eq:line4}) by using \C~\ref{cond:Belyaev}. \qedhere
\end{itemize}
\end{proof}

Lemma~\ref{lem:first} says that \C~\ref{cond:Toom} is equivalent to \C~\ref{cond:Belyaev} for $\mu$-positive PCA. Next Lemma~\ref{lem:second}, gives some necessary conditions for a $(\rho_0,D,U)$-HZMC to be invariant by a $\mu$-positive PCA.
\begin{lemme} \label{lem:second}
Let $\PCA{A}$ be a $\mu$-positive PCA. If $\PCA{A}$ satisfies the conditions of Lemma~\ref{lem:first}, then there exists $H$ a $\mu$-positive probability distribution on $(E,\borel{E})$ of $\mu$-density $\eta$ such that the respective $\mu$-densities of $D$ and $U$ are, for $\mu^3$-almost $(a,b,c)$, $d^\eta$ and $u^\eta$ as defined in Eq~(\ref{eq:DUeta}).
\end{lemme}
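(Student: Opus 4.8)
The plan is to obtain $\eta$ explicitly as a one-variable section of $u$ and then to verify, by a short computation built on \C~\ref{cond:Toom} and \C~\ref{cond:Belyaev}, that $D$ and $U$ are recovered from $\eta$ through Eq~(\ref{eq:DUeta}). By the conditions of Lemma~\ref{lem:first} I may use, for $\mu^3$-almost $(a,b,c)$, both \C~\ref{cond:Toom} in the form $t(a,b;c)\truc{du}(a;b)=d(a;c)u(c;b)$ and \C~\ref{cond:Belyaev} in its rearranged multiplicative form
\[
\frac{t(a,b;c)}{t(a,b;c_0)}=\frac{t(a,b_0;c)}{t(a,b_0;c_0)}\cdot\frac{t(a_0,b;c)}{t(a_0,b;c_0)}\cdot\frac{t(a_0,b_0;c_0)}{t(a_0,b_0;c)}.
\]

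First I fix the triplet $(a_0,b_0,c_0)$ inside a set of full measure so that simultaneously: $T(a_0,b_0;\cdot)$ is positive equivalent to $\mu$; the sections $b\mapsto t(a_0,b;c_0)$, $(a,c)\mapsto t(a,b_0;c)$, $c\mapsto t(a_0,b_0;c)$ of $t$ are bona fide (positive, suitably normalised) densities; $c\mapsto d(a_0;c)$ and $b\mapsto u(c_0;b)$ are bona fide probability densities of $D(a_0;\cdot)$ and $U(c_0;\cdot)$, positive $\mu$-a.e.; and \C~\ref{cond:Toom} and the displayed identity hold along all these sections. Each requirement excludes only a null set, so such a triplet exists. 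This selection is the only genuinely delicate point of the proof — it is exactly the measurability issue announced in the introduction, and it is where $\mu$-positivity and the Polish structure of $E$ (for disintegration) enter.

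Set $\eta:=u(c_0;\cdot)$; it is a positive element of $L^1(\mu)$ with $\int_E\eta\,\d\mu=1$, so $H:=\eta\,\d\mu$ is a $\mu$-positive probability distribution of $\mu$-density $\eta$. To get $u=u^\eta$: \C~\ref{cond:Toom} at $(a_0,b,c_0)$ gives $\eta(b)/t(a_0,b;c_0)=\truc{du}(a_0;b)/d(a_0;c_0)$, hence by \C~\ref{cond:Toom} at $(a_0,b,c)$ one has $\frac{\eta(b)}{t(a_0,b;c_0)}t(a_0,b;c)=\frac{d(a_0;c)u(c;b)}{d(a_0;c_0)}$; integrating in $b$ (with $u(c;\cdot)$ a probability density) yields $J(c):=\int_E\frac{\eta(x)}{t(a_0,x;c_0)}t(a_0,x;c)\,\d\mu(x)=d(a_0;c)/d(a_0;c_0)$, and taking the quotient gives $u^\eta(c;b)=u(c;b)$ for $\mu^2$-almost $(c,b)$. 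The identity $J(c)=d(a_0;c)/d(a_0;c_0)$ is reused below.

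To get $d=d^\eta$, I show both sides are, for $\mu$-almost $a$, probability densities proportional to $c\mapsto t(a,b_0;c)d(a_0;c)/t(a_0,b_0;c)$, hence equal $\mu$-a.e. For $d$, divide \C~\ref{cond:Toom} at $(a,b_0,c)$ by the same at $(a_0,b_0,c)$: $d(a;c)=\frac{\truc{du}(a;b_0)}{\truc{du}(a_0;b_0)}\cdot\frac{t(a,b_0;c)d(a_0;c)}{t(a_0,b_0;c)}$. For $d^\eta$, insert the multiplicative identity (with $b$ replaced by $x$) into the numerator of $d^\eta$ and use $J(c)=d(a_0;c)/d(a_0;c_0)$:
\[
\int_E\frac{\eta(x)}{t(a,x;c_0)}t(a,x;c)\,\d\mu(x)=\frac{t(a_0,b_0;c_0)}{t(a,b_0;c_0)\,d(a_0;c_0)}\cdot\frac{t(a,b_0;c)\,d(a_0;c)}{t(a_0,b_0;c)};
\]
integrating this in $c$ (Tonelli, $\int_E t(a,x;c)\,\d\mu(c)=1$) identifies the $c$-independent denominator of $d^\eta$, which is finite since $d(a;\cdot)$ has mass $1$. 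Dividing gives $d^\eta(a;c)=d(a;c)$ for $\mu^2$-almost $(a,c)$, so together with $u=u^\eta$ both equalities hold for $\mu^3$-almost $(a,b,c)$, which is the lemma. I expect the algebra to go through as above; the real work — and the only place the general-alphabet setting bites — is the first paragraph, the choice of $(a_0,b_0,c_0)$ making every pointwise section used legitimate.
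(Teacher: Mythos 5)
Your proposal is correct and takes essentially the same route as the paper: you choose $\eta = u(c_0;\cdot)$ and recover $u=u^\eta$ and $d=d^\eta$ from \C~\ref{cond:Toom} combined with the Belyaev relation \C~\ref{cond:Belyaev}, which is exactly the paper's argument (it first gets $d=d^\eta$ by integrating Toom's relation in $b$ and normalizing, then uses \C~\ref{cond:Belyaev}--\C~\ref{cond:BelyaevGen} to replace $a$ by $a_0$ in $u$, while you do the mirror-image algebra). Your explicit generic selection of $(a_0,b_0,c_0)$ so that all the slices used are legitimate only spells out a measurability step the paper leaves implicit.
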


\begin{proof}
Suppose that, for $\mu^3$ almost $(a,b,c)$, 
\begin{equation} \label{eq:base}
\truc{du}(a;b) = \frac{d(a;c) u(c;b)}{t(a,b;c)} = \frac{d(a;c_0) u(c_0;b)}{t(a,b;c_0)}.
\end{equation}
Then $\displaystyle d(a;c) u(c;b) = d(a;c_0) \frac{u(c_0;b)}{t(a,b;c_0)} t(a,b;c)$. Integrating with respect to $b$,
\begin{displaymath}
d(a;c) = d(a;c_0) \int_E \frac{u(c_0;b)}{t(a,b;c_0)} t(a,b;c) \d \mu(b)
\end{displaymath}
and, then,
\begin{equation} \label{eq:ubase}
u(c;b) = \frac{\displaystyle \frac{u(c_0;b)}{t(a,b;c_0)} t(a,b;c)}{\displaystyle \int_E \frac{u(c_0;x)}{t(a,x;c_0)} t(a,x;c) \d \mu(x)}.
\end{equation}
\C~\ref{cond:Belyaev} and \C~\ref{cond:BelyaevGen} permit to replace $a$ by $a_0$ on the right side of Eq~(\ref{eq:ubase}). Then, taking $\eta(b) = u(c_0;b)$ ends the proof.
\end{proof}

Now, we can end the proof of Theorem~\ref{theo:main}.

\begin{proof}[Proof of Theorem~\ref{theo:main}]
Let $\PCA{A}$ be a $\mu$-positive PCA. If $(\rho_0,D,U)$ is an invariant HZMC for $\PCA{A}$, then there exists $\eta \in L^1(\mu)$ such that Eq~(\ref{eq:DUeta}) holds by \C~\ref{cond:Toom}, Lemma~\ref{lem:first} and Lemma~\ref{lem:second}.

Moreover, $u$ and $d$ satisfy \C~\ref{cond:commut}. Hence, rewriting $\truc{du}$ and $\truc{ud}$ in terms of $\eta$,
\begin{eqnarray}
\truc{du}(a;b) & = & \int_E \frac{\displaystyle \int_E \frac{\eta(x)}{t(a,x;c_0)} t(a,x;c) \d \mu(x)}{\displaystyle \int_E \frac{\eta(x)}{t(a,x;c_0)} \d \mu(x)} 
\frac{\displaystyle \frac{\eta(b)}{t(a_0,b;c_0)}t(a_0,b;c)}{\displaystyle \int_E \frac{\eta(x)}{t(a_0,x;c_0)} t(a_0,x;c) \d \mu(x)} \d \mu(c) \label{eq:lineCom1}\\
& = & \int_E \frac{\displaystyle \int_E \frac{\eta(x)}{t(a,x;c_0)} t(a,b;x) \d \mu(x)}{\displaystyle \int_E \frac{\eta(x)}{t(a,x;c_0)} \d \mu(x)} 
\frac{\displaystyle \frac{\eta(b)}{t(a,b;c_0)}t(a,b;c)}{\displaystyle \int_E \frac{\eta(x)}{t(a,x;c_0)} t(a,x;c) \d \mu(x)} \d \mu(c)  \label{eq:lineCom2}\\
& = & \frac{\displaystyle \frac{\eta(b)}{t(a,b;c_0)}}{\displaystyle \int_E \frac{\eta(x)}{t(a,x;c_0)} \d \mu(x)}, \nonumber
\end{eqnarray}
we pass from line~(\ref{eq:lineCom1}) to line~(\ref{eq:lineCom2}) replacing $\displaystyle \frac{t(a_0,b;c) t(a_0,x;c_0)}{t(a_0,b;c_0) t(a_0,x;c)}$ by $\displaystyle \frac{t(a,b;c) t(a,x;c_0)}{t(a,b;c_0) t(a,x;c)}$ using \C~\ref{cond:Belyaev} and~\ref{cond:BelyaevGen}; and
\begin{eqnarray*}
\truc{ud}(a;b) & = & \int_E \frac{\displaystyle \frac{\eta(c)}{t(a_0,c;c_0)}t(a_0,c;a)}{\displaystyle \int_E \frac{\eta(x)}{t(a_0,x;c_0)} t(a_0,x;a) \d \mu(x)} 
\frac{\displaystyle \int_E \frac{\eta(x)}{t(c,x;c_0)} t(c,x;b) \d \mu(x)}{\displaystyle \int_E \frac{\eta(x)}{t(c,x;c_0)} \d \mu(x)} \d \mu(c).
\end{eqnarray*}

Hence, $\eta$ satisfies Eq~(\ref{eq:pfff}) of \C~\ref{cond:pfff}.

Finally, we need a distribution $\rho_0$ to satisfy \C~\ref{cond:invD} with $D = D^\eta$, this is possible only if \C~\ref{cond:invDex} holds.

Conversely, if we suppose \C~\ref{cond:Belyaev}, \C~\ref{cond:pfff} and \C~\ref{cond:invDex}, then all the previous computations hold and then we obtain \C~\ref{cond:Toom}, \C~\ref{cond:commut} and \C~\ref{cond:invD} for $D=D^\eta$, $U=U^\eta$ and $\rho_0$ and then we conclude using Theorem~\ref{theo:Toom}.
\end{proof}

\subsection{Proof of Proposition~\ref{prop:diag}}
Let $\PCA{A}$ be a PCA and suppose that \C~\ref{cond:Belyaev}, \C~\ref{cond:BelyaevDiag} and \C~\ref{cond:pfffDiag} hold. Then, we can replace in \C~\ref{cond:pfff} the $a_0$ by $c$ using \C~\ref{cond:Belyaev} and \C~\ref{cond:BelyaevDiag}. Then $\eta$ must verify: for $\mu$-almost $a$ and for the $c_0$ of \C~\ref{cond:Belyaev},
\begin{displaymath}
\frac{ \displaystyle \frac{\eta(a)}{t(a,a;c_0)} }{ \displaystyle \int_E \frac{\eta(x)}{t(a,x;c_0)} \d \mu(x) } = \int_E \frac{\displaystyle \frac{\eta(c)}{t(c,c;c_0)}}{\displaystyle \int_E \frac{\eta(x)}{t(c,x;c_0)} \d \mu(x)} t(c,c;a) \d \mu(c).
\end{displaymath}

So, we see that $\left(a \mapsto \frac{ \displaystyle \frac{\eta(a)}{t(a,a;c_0)} }{ \displaystyle \int_E \frac{\eta(x)}{t(a,x;c_0)} \d \mu(x) } \right)$ is an eigenfunction of the operator $\mathcal{A}_1 : f \mapsto \left( \mathcal{A}_1(f) : a \mapsto \displaystyle \int_E f(c) t(c,c;a) \d \mu(c) \right)$. Hence, by Lemma~\ref{lem:Durrett}, if there exists a positive eigenfunction $\nu$ in $L^1(\mu)$ for $\mathcal{A}_1$, it is unique up to a multiplicative constant. Hence, there exists $\lambda>0$ such that, for $\mu$-almost $a$,
\begin{displaymath}
\frac{ \displaystyle \frac{\eta(a)}{t(a,a;c_0)} }{ \displaystyle \int_E \frac{\eta(x)}{t(a,x;c_0)} \d \mu(x) } = \lambda \nu(a),
\end{displaymath}
which is equivalent to
\begin{displaymath}
\eta(a) = \lambda \int_E \eta(x) \frac{t(a,a;c_0)}{t(a,x;c_0)} \nu(a) \d \mu(x).
\end{displaymath}

Hence, $\eta$ is an eigenfunction of $\mathcal{A}_2 : f \mapsto \left( \mathcal{A}_2(f) : a \mapsto \displaystyle \int_E f(x) \frac{t(a,a;c_0)}{t(a,x;c_0)} \nu(a) \d \mu(x) \right)$.

\section{Extension to $\mathbb{Z}$ and $\cyl{n}$} \label{sec:extension}
\subsection{PCA on $\mathbb{Z}$}\label{sec:PCAonZ}
In this section, we extend Theorem~\ref{theo:Toom} and~\ref{theo:main} to $\mathbb{Z}$. The main change is that $\rho_0$ the initial probability distribution for a HZMC on $\mathbb{N}$ is replaced on $\mathbb{Z}$ by a sequence of probability distributions $R=(\rho_i)_{i \in \mathbb{Z}}$ indexed by $\mathbb{Z}$.

Let us define a $\text{HZMC}_{\mathbb{Z}}$ on $\mathbb{Z}$. The geometrical structure is now
\begin{displaymath}
\HZ_{\mathbb{Z}}(t) = \left\{ \left( \left\lfloor \frac{i}{2} \right\rfloor, t + \frac{1+(-1)^{i+1}}{2} \right), i \in \mathbb{Z} \right\}.
\end{displaymath}
See Figure~\ref{fig:HZMC_Z} for a graphical representation.\par
On this structure, a $(R,D,U)$-$\text{HZMC}_{\mathbb{Z}}$ is a Markov chain with two t.k. $D$ and $U$ and a family of probability distributions $R = \left( \rho_i \right)_{i \in \mathbb{Z}}$ such that
\begin{itemize}
  \setlength{\itemsep}{1pt}
  \setlength{\parskip}{0pt}
  \setlength{\parsep}{0pt}
\item for all $i \in \mathbb{Z}$, the distribution of state $S(i,t)$ is $\rho_i$,
\item the distribution of $S(i,t+1)$ knowing $S(i,t)$ is given by $D$, and that of $S(i+1,t)$ knowing $S(i,t+1)$ is given by $U$.
\end{itemize}\par

Hence, for any $i \in \mathbb{Z}$, the distributions $\rho_{i}$, $\rho_{i+1}$, $D$ and $U$ are constrained such that $\rho_i DU = \rho_{i+1}$. In the case of a $\mu$-supported $(R,D,U)$-$\text{HZMC}_{\mathbb{Z}}$ (i.e. for all $i \in \mathbb{Z}$, $\rho_i \ll \mu$ and $D$ and $U$ are $\mu$-supported t.k.), this gives
\begin{equation}\label{eq:compa}
r_{i+1}(x_{i+1}) = \int_{E} r_{i}(x_i) \truc{du}(x_i;x_{i+1}) \d \mu(x_i).
\end{equation}
A family of probability distributions $R$ that possesses this property is said to be compatible with $(D,U)$.\par

As before, we define the support $\tilde{E} = \bigcup_{i \in \mathbb{Z}} \text{supp}\left(\rho_i\right)$ of an $\text{HZMC}_{\mathbb{Z}}$. 
If the $\text{HZMC}_{\mathbb{Z}}$ is $\mu$-supported, then, for $\mu$-almost $a \in E$, there exists $i$ such that $r_i(a) > 0$ and,
in the case of a $\mu$-positive $\text{HZMC}_{\mathbb{Z}}$ (i.e. for all $i \in \mathbb{Z}$, $\rho_i$ and $\mu$ are positive equivalent and $D$ and $U$ are $\mu$-positive t.k.), $\tilde{E} = \text{supp}(\mu)$.

\begin{figure}
\begin{center}
\includegraphics{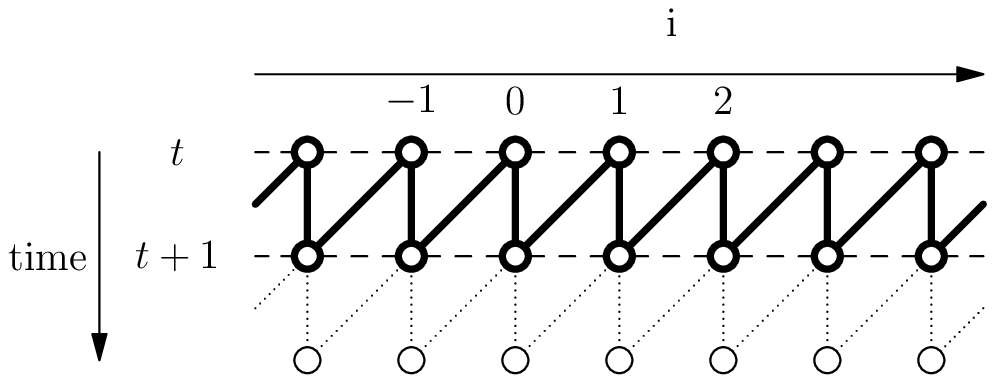}
\caption{In bold, $\HZ_{\mathbb{Z}}(t)$, the $t$th horizontal zigzag on $\mathbb{Z}$ on a space time diagram.}
\label{fig:HZMC_Z}
\end{center}
\end{figure}

The following theorem is an extension of Theorem~\ref{theo:Toom} for PCA on $\mathbb{Z}$.
\begin{theoreme} \label{theo:ToomZ}
Let $\mu$ be a $\sigma$-finite measure on $E$. Let $\PCA{A} := (\mathbb{Z}, E, N, T)$ be a $\mu$-supported PCA and $(R,D,U)$ a $\mu$-supported $\text{HZMC}_{\mathbb{Z}}$.
The $(R,D,U)$-$\text{HZMC}_{\mathbb{Z}}$ is invariant by $\PCA{A}$ if and only if \C~\ref{cond:Toom}, \C~\ref{cond:commut} and the following condition are satisfied:
\begin{cond}\label{cond:invDZ}
$\rho$ is invariant by t.k. $D$, i.e. for all $i \in \mathbb{Z}$, $\rho_i D = \rho_i$, i.e. for all $i \in \mathbb{Z}$, $\displaystyle r_i(c) = \int_E r_i(a) d(a;c) \d \mu(a)$.
\end{cond}
\end{theoreme}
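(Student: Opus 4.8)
The plan is to transcribe the proof of Theorem~\ref{theo:Toom} almost verbatim: the only structural change on $\mathbb{Z}$ is that the single boundary law $\rho_0$ is replaced by the family $R=(\rho_i)_{i\in\mathbb{Z}}$, which a $\text{HZMC}_{\mathbb{Z}}$ already carries together with the compatibility relation Eq~(\ref{eq:compa}). As for PCA on $\mathbb{N}$, finite-dimensional laws on windows of $\mathbb{Z}$ characterize the distribution on $E^{\mathbb{Z}}$, so it suffices to argue window by window.

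For necessity I would work in the stationary regime in which every $\HZ_{\mathbb{Z}}(t)$ is distributed as the invariant $(R,D,U)$-$\text{HZMC}_{\mathbb{Z}}$, so that simultaneously line $t+1$ is the $\PCA{A}$-image of line $t$ and $\HZ_{\mathbb{Z}}(t)$ carries the $\text{HZMC}_{\mathbb{Z}}$ law. Fixing $i\in\mathbb{Z}$, I compute the law of $(S(i,t),S(i+1,t),S(i,t+1))$ in two ways: traversing $\HZ_{\mathbb{Z}}(t)$ as $S(i,t)\to S(i,t+1)\to S(i+1,t)$ gives density $r_i(a)d(a;c)u(c;b)$, whereas forming the pair $(S(i,t),S(i+1,t))$ (density $r_i(a)\truc{du}(a;b)$) and then applying $T$ gives $r_i(a)\truc{du}(a;b)t(a,b;c)$; equating over all Borel boxes and using that for $\mu$-almost every $a\in\tilde{E}$ some $r_i(a)>0$ yields \C~\ref{cond:Toom}. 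Likewise, the $2$-point law of $(S(i,t+1),S(i+1,t+1))$ has density $r_i(a)\truc{ud}(a;b)$ when traversed through $S(i+1,t)$ in $\HZ_{\mathbb{Z}}(t)$ and density $r_i(a)\truc{du}(a;b)$ when traversed through $S(i,t+2)$ in $\HZ_{\mathbb{Z}}(t+1)$ (which by invariance is again the $(R,D,U)$-$\text{HZMC}_{\mathbb{Z}}$, whence in particular $S(i,t+1)$ has marginal $\rho_i$), giving \C~\ref{cond:commut}; and comparing the cell-$i$ marginal of line $t+1$, namely $\rho_iD$ read off $\HZ_{\mathbb{Z}}(t)$ versus $\rho_i$ read off $\HZ_{\mathbb{Z}}(t+1)$, gives $\rho_iD=\rho_i$ for every $i$, i.e.\ \C~\ref{cond:invDZ}.

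For sufficiency, assuming \C~\ref{cond:Toom}, \C~\ref{cond:commut}, \C~\ref{cond:invDZ} (and keeping in mind that $R$ satisfies $\rho_i\truc{du}=\rho_{i+1}$ by Eq~(\ref{eq:compa})), I would push the $(R,D,U)$-$\text{HZMC}_{\mathbb{Z}}$ through $\PCA{A}$ and check it is unchanged on an arbitrary window $\{m,\dots,m+n+2\}$. Exactly as in Theorem~\ref{theo:Toom}, starting from $r_m(a_0)\prod_{i=0}^{n+1}d(a_i;b_i)u(b_i;a_{i+1})\prod_{i=0}^{n}t(b_i,b_{i+1};c_i)$ with $a_i=S(m+i,t)$, $b_i=S(m+i,t+1)$, $c_i=S(m+i,t+2)$, I integrate out the $a_i$: the leftmost integral $\int_E r_m(a_0)d(a_0;b_0)\,\d\mu(a_0)$ collapses to $r_m(b_0)$ by \C~\ref{cond:invDZ}, the interior integrals give $\truc{ud}(b_i;b_{i+1})$, and the last one is $1$; then \C~\ref{cond:commut} turns $\truc{ud}$ into $\truc{du}$ and \C~\ref{cond:Toom} turns $\truc{du}(b_i;b_{i+1})t(b_i,b_{i+1};c_i)$ into $d(b_i;c_i)u(c_i;b_{i+1})$, leaving $r_m(b_0)\prod_{i=0}^{n}d(b_i;c_i)u(c_i;b_{i+1})$, the $(R,D,U)$-$\text{HZMC}_{\mathbb{Z}}$ restricted to that window of $\HZ_{\mathbb{Z}}(t+1)$; matching all such windows gives invariance.

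I do not expect a serious obstacle; the only point that differs from the $\mathbb{N}$ case is the treatment of the left end of a window. On $\mathbb{N}$ one only invoked $\rho_0D=\rho_0$; on $\mathbb{Z}$ the window may begin at any cell $m$, so the full force of \C~\ref{cond:invDZ} (invariance of \emph{every} $\rho_i$) is needed, and one has to observe that this requirement sits consistently alongside the compatibility relation $\rho_i\truc{du}=\rho_{i+1}$ already built into $R$, so that the image really is the same $\text{HZMC}_{\mathbb{Z}}$---marginals, kernels, and all. Everything else is a line-by-line copy of the proof of Theorem~\ref{theo:Toom}.
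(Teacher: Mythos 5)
Your proof is correct and is essentially the paper's argument: the paper disposes of Theorem~\ref{theo:ToomZ} in one line by observing that invariance of the $(R,D,U)$-$\text{HZMC}_{\mathbb{Z}}$ amounts to invariance, for every $i\in\mathbb{Z}$, of the $(\rho_i,D,U)$-HZMC on the half-line of cells starting at $i$, and then citing Theorem~\ref{theo:Toom}, which gives \C~\ref{cond:Toom}, \C~\ref{cond:commut} and $\rho_i D=\rho_i$ for all $i$. Your window-by-window rerun of the Theorem~\ref{theo:Toom} computation is that reduction carried out explicitly, with the same key point that on $\mathbb{Z}$ every cell can serve as the left endpoint of a window, so every $\rho_i$ must be $D$-invariant.
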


\begin{proof}
This is an immediate consequence of Theorem~\ref{theo:Toom} because we just need, for all $i \in \mathbb{Z}$, the $(\rho_i,D,U)$-HZMC to be invariant by $\PCA{A}$.
\end{proof}

As in Theorem~\ref{theo:main} where we go further for $\mu$-positive PCA on $\mathbb{N}$, we obtain a necessary and sufficient condition on $\mu$-positive PCA to have an invariant $\text{HZMC}_{\mathbb{Z}}$.
\begin{theoreme} \label{theo:mainZ}
Let $\mu$ be a $\sigma$-finite measure on $E$. Let $\PCA{A} := (\mathbb{Z}, E, N, T)$ be a $\mu$-positive PCA. $\PCA{A}$ admits a $\mu$-positive invariant $\text{HZMC}_{\mathbb{Z}}$ if and only if \C~\ref{cond:Belyaev}, \C~\ref{cond:pfff} and \C~\ref{cond:invDex} hold.

In this case, the $(R,D,U)$-$\text{HZMC}_{\mathbb{Z}}$ has for $\mu$-densities $d^\eta$ and $u^\eta$ as defined in Eq~(\ref{eq:DUeta}) and, for any $i \in \mathbb{Z}$, $\rho_i=\rho_0$.
\end{theoreme}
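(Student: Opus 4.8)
The plan is to reduce the statement on $\mathbb{Z}$ to the already-established Theorem~\ref{theo:main} on $\mathbb{N}$, exploiting the homogeneity of a $\mu$-positive $\text{HZMC}_{\mathbb{Z}}$. First I would invoke Theorem~\ref{theo:ToomZ}: a $\mu$-supported $(R,D,U)$-$\text{HZMC}_{\mathbb{Z}}$ is invariant by $\PCA{A}$ if and only if \C~\ref{cond:Toom}, \C~\ref{cond:commut} and \C~\ref{cond:invDZ} hold. Since $\PCA{A}$ is $\mu$-positive and we restrict attention to $\mu$-positive $\text{HZMC}_{\mathbb{Z}}$, the support issue disappears ($\tilde E = \operatorname{supp}(\mu)$), so these conditions are the same ones that govern the $\mathbb{N}$ case, except that \C~\ref{cond:invDZ} now demands $\rho_i D = \rho_i$ for \emph{every} $i$ rather than just for $i=0$.

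Next I would run the equivalence chain exactly as in the proof of Theorem~\ref{theo:main}. Lemma~\ref{lem:first} shows \C~\ref{cond:Toom} $\Leftrightarrow$ \C~\ref{cond:Belyaev} (together with \C~\ref{cond:BelyaevGen}) for any $\mu$-positive PCA; this step is lattice-independent, so it carries over verbatim. Lemma~\ref{lem:second} then produces a $\mu$-positive probability density $\eta$ such that $D$ and $U$ necessarily have $\mu$-densities $d^\eta$ and $u^\eta$ of Eq~(\ref{eq:DUeta}); and \C~\ref{cond:commut} forces $\eta$ to solve Eq~(\ref{eq:pfff}), i.e.\ \C~\ref{cond:pfff}. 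Conversely, given \C~\ref{cond:Belyaev} and \C~\ref{cond:pfff}, setting $D=D^\eta$, $U=U^\eta$ recovers \C~\ref{cond:Toom} and \C~\ref{cond:commut}. All of this is identical to the $\mathbb{N}$ argument because $D$ and $U$ are determined by the kernel $T$ alone, with no reference to the lattice.

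The only genuinely new point — and the one I expect to be the main obstacle to state cleanly — is the treatment of the family $R=(\rho_i)_{i\in\mathbb{Z}}$ under the two constraints it must satisfy: the compatibility relation $\rho_i D^\eta U^\eta = \rho_{i+1}$ of Eq~(\ref{eq:compa}) (so that $R$ is genuinely an $\text{HZMC}_{\mathbb{Z}}$) and the invariance relation $\rho_i D^\eta = \rho_i$ of \C~\ref{cond:invDZ}. The claim is that these together force $\rho_i = \rho_0$ for all $i$, with $\rho_0$ any invariant distribution of $D^\eta$, which is exactly \C~\ref{cond:invDex}. I would argue this as follows: if $\rho_i D^\eta = \rho_i$ then, by \C~\ref{cond:commut} ($\overline{d^\eta u^\eta}=\overline{u^\eta d^\eta}$), one also gets $\rho_i D^\eta U^\eta = \rho_i U^\eta D^\eta$; but a short computation using $\rho_i D^\eta = \rho_i$ shows $\rho_i D^\eta U^\eta = \rho_i U^\eta D^\eta = \rho_i$ — hence $\rho_{i+1}=\rho_i$ from compatibility, and by induction in both directions $\rho_i\equiv\rho_0$. (One must check the interchange $\rho_i U^\eta D^\eta=(\rho_i D^\eta)U^\eta$-type step is legitimate; this is where \C~\ref{cond:commut} and the $\mu$-positivity are really used, so that all the densities involved are strictly positive and the Fubini manipulations are valid.) Conversely, if \C~\ref{cond:invDex} supplies an invariant $\rho_0$ for $D^\eta$, taking $\rho_i\equiv\rho_0$ automatically satisfies both \C~\ref{cond:invDZ} and the compatibility Eq~(\ref{eq:compa}), since $\rho_0 D^\eta U^\eta = \rho_0 U^\eta D^\eta = \rho_0$. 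Plugging $D=D^\eta$, $U=U^\eta$, $R=(\rho_0)_{i\in\mathbb{Z}}$ into Theorem~\ref{theo:ToomZ} then gives invariance, completing the proof.
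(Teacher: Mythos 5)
Your overall strategy is the same as the paper's: reduce to Theorem~\ref{theo:ToomZ}, re-run Lemmas~\ref{lem:first} and~\ref{lem:second} and the computation of Theorem~\ref{theo:main} to get the equivalence of \C~\ref{cond:Toom}--\C~\ref{cond:commut} with \C~\ref{cond:Belyaev}--\C~\ref{cond:pfff} and the form $d^\eta,u^\eta$ of Eq~(\ref{eq:DUeta}), and then observe that the only genuinely new point is the interplay between the compatibility relation Eq~(\ref{eq:compa}) and the invariance $\rho_i D^\eta=\rho_i$. However, at exactly that new point there is a genuine gap. From $\rho_0 D^\eta=\rho_0$ and \C~\ref{cond:commut} the ``short computation'' you invoke only yields $\rho_0 D^\eta U^\eta=\rho_0 U^\eta=\rho_0 U^\eta D^\eta$, i.e.\ that the probability density $y\mapsto\int_E r_0(x)\,u^\eta(x;y)\,\d\mu(x)$ is again a positive $L^1(\mu)$ invariant density for $D^\eta$. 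It does \emph{not} yield the equality $\rho_0 D^\eta U^\eta=\rho_0$ (equivalently $\rho_0 U^\eta=\rho_0$) that you assert, and on which both your induction $\rho_{i+1}=\rho_i$ and, in the converse direction, the compatibility of $R=(\rho_0)_{i\in\mathbb{Z}}$ with $(D^\eta,U^\eta)$ rest: a Markov kernel can in general have several invariant distributions, so nothing in the manipulation identifies $\rho_0 U^\eta$ with $\rho_0$. Also, the technical point you flag (legitimacy of the Fubini-type interchanges) is not the real obstacle -- those are harmless for nonnegative integrands -- the missing ingredient is a uniqueness statement.

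This is precisely where the paper uses Lemma~\ref{lem:Durrett}: since $d^\eta$ is the $\mu$-density of a $\mu$-positive kernel, the integral operator with kernel $d^\eta$ has at most one positive eigenfunction in $L^1(\mu)$ up to a multiplicative constant; hence $\rho_0 U^\eta=\lambda\rho_0$, and $\lambda=1$ because both sides integrate to $1$ against $\mu$. With this inserted, your argument closes: $\rho_0 D^\eta U^\eta=\rho_0 U^\eta=\rho_0$ gives Eq~(\ref{eq:compa}), $R=(\rho_0)_{i\in\mathbb{Z}}$ satisfies \C~\ref{cond:invDZ}, and Theorem~\ref{theo:ToomZ} concludes; likewise the constancy $\rho_i\equiv\rho_0$ in the direct sense follows at once from the same lemma, since each $\rho_i$ is a positive $D^\eta$-invariant probability density. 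So the route is the right one, but the uniqueness step via Lemma~\ref{lem:Durrett} must be stated explicitly -- it is the actual content of the paper's proof beyond the citation of Theorem~\ref{theo:main}.
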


\begin{proof}
It is an immediate consequence of Theorem~\ref{theo:main}. The only new thing to verify is that $R=\rho_0^{\mathbb{Z}}$ is $(D,U)$ compatible, i.e. $r_0$ satisfies Eq~(\ref{eq:compa}) to check that $(R,D,U)$ defines a $\text{HZMC}_{\mathbb{Z}}$. By \C~\ref{cond:invDex}, for $\mu$-almost $y_i$, 
\begin{equation} \label{eq:invDproof}
\int_E r_0(x_i) d(x_i;y_i) \d \mu(x_i) = r_0(y_i). 
\end{equation}
But, satisfying Eq~(\ref{eq:compa}) and Eq~(\ref{eq:invDproof}) is equivalent to satisfy, for $\mu$-almost $x_{i+1}$,
\begin{displaymath}
\int_E r_0(y_i) u(y_i;x_{i+1}) \d \mu(y_i) = r_0(x_{i+1}). 
\end{displaymath}
Now, from Eq~(\ref{eq:invDproof}), for $\mu$-almost $x_{i+1}$,
\begin{displaymath}
\iint_{E^2} r_0(x_i) d(x_i;y_i) u(y_i;x_{i+1}) \d \mu(x_i) \d \mu(y_i) = \int_E r_0(y_i) u(y_i;x_{i+1}) \d \mu(y_i).
\end{displaymath}
But as $\truc{du} = \truc{ud}$,
\begin{displaymath}
  \int_E \left(\int_E r_0(x_i) u(x_i;y_i) \d \mu(x_i)\right) d(y_i;x_{i+1}) \d \mu(y_i) = \int_E r_0(y_i) u(y_i;x_{i+1}) \d \mu(y_i).
\end{displaymath}
So, $f: y \rightarrow \displaystyle \int_E r_0(x) u(x;y) \d \mu(x)$ is a positive eigenfunction of the integral operator $\mathcal{A}$ of kernel $d$. By Lemma~\ref{lem:Durrett}, this eigenfunction is unique (up to a multiplicative constant) equal to $r_0$, so $\displaystyle \int_E r_0(x) u(x;y) \d \mu(x) = \lambda r_0(y)$ and $\lambda = 1$ because they both integrates (with respect to $\mu$) to $1$. This ends the proof.
\end{proof}

In that case, Prop~\ref{prop:diag} still holds and Prop~\ref{prop:sameInv} also holds if $(\rho_0,D,U)$-HZMC is replaced by $(R,D,U)$-$\text{HZMC}_{\mathbb{Z}}$.

\subsection{PCA on $\cyl{n}$}\label{sec:PCAonC}
In this section, we have results, similar to Theorems~\ref{theo:Toom} and~\ref{theo:main}, on the lattice $\cyl{n}$. The main change is that we characterize PCA whose invariant distribution is a cyclic-HZMC (CHZMC).\par
Consider, as represented on Figure~\ref{fig:HZMC_C},
\begin{displaymath}
\CHZ(t) = \left\{ \left( \left\lfloor \frac{i}{2} \right\rfloor, t + \frac{1+(-1)^{i+1}}{2} \right), i \in \cyl{(2n)} \right\}.
\end{displaymath}

\begin{figure}
\begin{center}\includegraphics{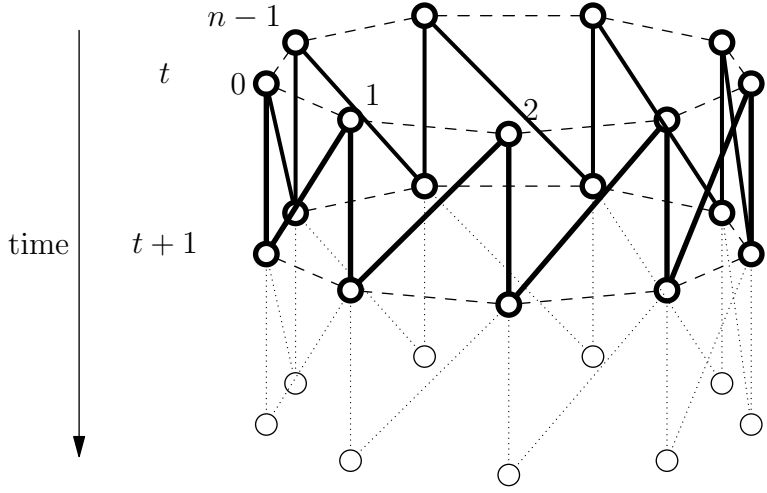}
\caption{In bold, $\CHZ(t)$, the $t$th cyclic horizontal zigzag on a space time diagram.}
\label{fig:HZMC_C}
\end{center}
\end{figure}

Let $(D,U)$ be two $\mu$-supported t.k. from $E$ to $E$ such that
\begin{displaymath}
Z(D,U) = \int_{E^{2n}} u(y_{n-1};x_0) d(x_0;y_0) \dots d(x_{n-1};y_{n-1}) \d \mu^{2n}(x_0,y_0,x_1,\dots,y_{n-1}) \notin \{0,+\infty\}.
\end{displaymath}
We define the measure $M$ on $\CHZ$ called ($\mu$-supported) $(D,U)$-CHZMC by its $\mu^{2n}$-density $m$ that is: for $\mu$-almost $x_0,y_0,\dots,y_{n-1} \in E$,
\begin{displaymath}
m(x_0,y_0,\dots,y_{n-1}) = \frac{u(y_{n-1};x_0) d(x_0;y_0) \dots d(x_{n-1};y_{n-1})}{Z(D,U)}.
\end{displaymath}\par

For simplicity, we define, formally, only $\mu$-supported $(D,U)$-CHZMC ($D$ and $U$ are $\mu$-supported t.k. from $E$ to $E$).\par

When $E$ is finite, a CHZMC is a HZMC conditioned to be periodic. In general, a CHZMC is a Gibbs measure in the cyclic horizontal zigzag (CHZ).\par

Cyclic Markov chain were introduced, first, by Albenque~\cite{Alb09} to define periodic Markov chain on $\cyl{n}$.\par

\begin{notation}
The distribution of the line $S_t$ (resp. $S_{t+1}$) is denoted $M^{(1)}$ (resp. $M^{(2)}$) and its $\mu^n$-density is obtained by integration of $m$ with respect to the $n$ variables $y_0,\dots,y_{n-1}$ (resp. to the $n$ variables $x_0,\dots,x_{n-1}$). The distribution of the state $S(i,t)$ is denoted $M^{(1)}_i$ and its $\mu$-density is obtained by integration of $m$ with respect to the $2n-1$ variables $x_0,y_0,\dots,x_{i-1},y_{i-1},y_{i},x_{i+1},\dots,x_{n-1}$.

For any $j \in \mathbb{N}$, for $\mu$-almost $a,b$, we let
\begin{displaymath}
\truc{(du)^j}(a;b) = \int_{E^{2j-1}} d(a;y_0) u(y_0;x_1) \dots u(y_{j-1};b) \d \mu^{2j-1}(y_0,x_1,\dots,y_{j-1}).
\end{displaymath}
\end{notation}\par

We obtain, first, a theorem about $\mu$-supported PCA having $\mu$-supported CHZMC. 
\begin{theoreme}\label{theo:ToomC}
Let $\mu$ be a $\sigma$-finite measure on $E$. Let $\PCA{A} := (\cyl{n}, E, N, T)$ be a $\mu$-supported PCA and $(D,U)$ a $\mu$-supported CHZMC.
The $(D,U)$-CHZMC is invariant by $\PCA{A}$ if and only if the two following conditions are satisfied:

\begin{cond}\label{cond:ToomC}
for $\mu$-almost $a,b,c \in E$,
\begin{displaymath}
\truc{du}(a;b) t(a,b;c) = d(a;c) u(c;b) \text{ or } \truc{(du)^{n-1}}(b;a) = 0,
\end{displaymath}
\end{cond}

\begin{cond}\label{cond:commutC}
for $\mu$-almost $x_0, x_1, \dots,x_{n-1} \in \tilde{E}$,
\begin{displaymath}
\truc{du}(x_0;x_1) \truc{du}(x_1;x_2) \dots \truc{du}(x_{n-1};x_0) = \truc{ud}(x_0;x_1) \truc{ud}(x_1;x_2) \dots \truc{ud}(x_{n-1};x_0) 
\end{displaymath}
\end{cond}
\end{theoreme}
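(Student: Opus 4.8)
The plan is to mimic the strategy of the proof of Theorem~\ref{theo:Toom}, but adapted to the cyclic geometry. The key point is that a $(D,U)$-CHZMC on $\cyl{n}$ is obtained by ``wrapping'' the $\mathbb{N}$-construction and renormalizing by $Z(D,U)$, so the local constraints (\C~\ref{cond:ToomC} and~\C~\ref{cond:commutC}) should be the cyclic analogues of \C~\ref{cond:Toom} and \C~\ref{cond:commut}, while the ``initial distribution'' condition \C~\ref{cond:invD} disappears because on a cycle the marginal of a given cell is forced. First I would fix notation: write the image of the $(D,U)$-CHZMC under $\PCA{A}$ as a measure on $\CHZ(t+1)$ with $\mu^{2n}$-density obtained by the usual integral formula,
\begin{displaymath}
m'(x_0,y_0,\dots,y_{n-1}) = \frac{1}{Z(D,U)} \int_{E^{n}} \left( \prod_{i \in \cyl{n}} u(\tilde y_{i-1};\tilde x_i) d(\tilde x_i;\tilde y_i) \right) \left( \prod_{i \in \cyl{n}} t(\tilde y_i, \tilde y_{i+1}; x_i) \right) \d \mu^n(\tilde y_0,\dots,\tilde y_{n-1}),
\end{displaymath}
where here $x_i = S(i,t+1)$ becomes the ``first line'' of $\CHZ(t+1)$, $y_i = S(i,t+2)$ the second line, and the $\tilde y$'s are the integrated-out second line of $\CHZ(t)$ (I would be careful with indices mod $n$; the writeup would make this explicit).

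The $\Leftarrow$ direction: assuming \C~\ref{cond:ToomC} and \C~\ref{cond:commutC}, I would carry out the same chain of substitutions as in the $\mathbb{N}$ proof. Using \C~\ref{cond:ToomC} in the form $t(a,b;c)\,\truc{du}(a;b) = d(a;c)u(c;b)$ (valid $\mu^3$-a.e.\ on the support, the alternative clause $\truc{(du)^{n-1}}(b;a)=0$ handling exactly the $\mu$-negligible-but-not-irrelevant region; one must check the zero set does not contribute to the integral — this is where the $\truc{(du)^{n-1}}$ factor is designed to vanish), one rewrites the $t$-product in $m'$ as a product of $d\,u$ factors divided by $\truc{du}$ factors. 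After reindexing, the $d\,u$ factors recombine with the original CHZMC weight, and one is left with a product of $\truc{ud}$-over-$\truc{du}$ correction terms around the cycle. Condition~\C~\ref{cond:commutC} says precisely that this cyclic product of $\truc{ud}(x_i;x_{i+1})/\truc{du}(x_i;x_{i+1})$ equals $1$, so $m' = m$ and invariance follows. One also checks the normalization constants match, i.e.\ the push-forward is automatically a probability measure because $\PCA{A}$ is a transition kernel; equivalently $Z(D,U)$ is preserved.

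The $\Rightarrow$ direction: assuming invariance, I would extract \C~\ref{cond:ToomC} and \C~\ref{cond:commutC} by testing against product Borel sets, exactly as in Theorem~\ref{theo:Toom}. Computing $\prob{S(i,t)\in A, S(i+1,t)\in B, S(i,t+1)\in C}$ two ways — once via the CHZMC structure of $\CHZ(t)$ as $\int r_i^{(1)}\text{-type weight}\cdot d\cdot u$ and once via the transition kernel $t$ — gives the relation $\text{(weight)}\cdot d(a;c)u(c;b) = \text{(weight)}\cdot\truc{du}(a;b)t(a,b;c)$ $\mu^3$-a.e.; the ``weight'' here is the $\mu$-density of the joint law of $(S(i,t),S(i+1,t),S(i,t+1))$, which is positive exactly off the set where $\truc{(du)^{n-1}}(b;a)=0$ (the factor encoding ``the rest of the cycle contributes''), yielding \C~\ref{cond:ToomC}. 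Similarly, comparing the law of the line $S_{t+1}$ computed from $\CHZ(t)$ (which uses $\truc{ud}$ around the cycle) with that of $S_{t+1}$ viewed as the first line of the invariant $\CHZ(t+1)$ (which uses $\truc{du}$) gives \C~\ref{cond:commutC}. The main obstacle I anticipate is the bookkeeping of the degenerate zero sets: on a cycle one cannot simply say ``$r_i>0$ for some $i$'' as on $\mathbb{N}$, so one must argue carefully that the $\mu$-null set where $\truc{du}(a;b)=0$ or $\truc{(du)^{n-1}}(b;a)=0$ is genuinely negligible for the relevant joint laws, and that the alternative clause in \C~\ref{cond:ToomC} is exactly what is needed — both for the forward direction (it is forced) and for the converse (it is harmless in the integral). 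A secondary technical point is keeping all the mod-$n$ index shifts consistent when recombining $d\,u$ factors; I would isolate that in a short reindexing lemma rather than inline it.
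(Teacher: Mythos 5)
Your proposal is correct and follows essentially the same route as the paper: the forward direction obtains \C~\ref{cond:ToomC} by computing $\prob{S(i,t)\in A,\ S(i+1,t)\in B,\ S(i,t+1)\in C}$ both through the CHZMC density and through the two-point marginal of the line times $t$, with the common factor $\truc{(du)^{n-1}}(x_{i+1};x_i)$ producing exactly the ``or'' clause, obtains \C~\ref{cond:commutC} by identifying the laws $M^{(1)}$ and $M^{(2)}$ of the two lines, and the converse is precisely the push-forward computation that the paper describes as an adaptation of the proof of Theorem~\ref{theo:Toom} (yielding $m'=m$ after the cyclic product of $\truc{ud}/\truc{du}$ collapses to $1$). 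The only blemish is notational: in your display for $m'$ the integrated-out variables should be the first line of $\CHZ(t)$ (line $t$), not its second line, and the variables on the two sides of the equation do not match, but you flag this bookkeeping yourself and it does not affect the argument.
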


\begin{proof}
Let a $(D,U)$-CHZMC be invariant by $\PCA{A}$. For all $A,B,C \in \borel{E}$, for all $i \in \cyl{n}$,
\begin{eqnarray*}
&&\prob{S(i,t) \in A, S(i+1,t) \in B, S(i,t+1) \in C} \\
&& = \frac{1}{Z(D,U)} \int_{A \times C \times B} d(x_i;y_i) u(y_i;x_{i+1}) \truc{(du)^{n-1}}(x_{i+1};x_i) \d \mu^3(x_i,y_i,x_{i+1}) \\
&& = \frac{1}{Z(D,U)} \int_{A \times C \times B} \truc{du}(x_i;x_{i+1}) t(x_i,x_{i+1};y_i) \truc{(du)^{n-1}}(x_{i+1};x_i) \d \mu^3(x_i,y_i,x_{i+1}).
\end{eqnarray*}
Hence, for $\mu$-almost $x_i,y_i,x_{i+1} \in E$, 
\begin{displaymath}
\truc{du}(x_i;x_{i+1}) t(x_i,x_{i+1};y_i) \truc{(du)^{n-1}}(x_{i+1};x_i) = d(x_i;y_i) u(y_i;x_{i+1}) \truc{(du)^{n-1}}(x_{i+1};x_i),
\end{displaymath}
that is \C~\ref{cond:ToomC}.

To prove \C~\ref{cond:commutC}, we use the fact that the second line of the $(D,U)$-CHZMC at time $t$ is the first line at time $t+1$ and since the CHZMC is invariant the law of the CHZMC at time $t$ and at time $t+1$ is the same $M$. But $M^{(1)}$ is the law of the first line and $M^{(2)}$ of the second, so $M^{(1)} = M^{(2)}$. In terms of $\mu^n$-densities, $m^{(1)}=m^{(2)}$. But, $\displaystyle m^{(1)}(x_0,\dots,x_{n-1}) = \frac{1}{Z(D,U)} \truc{du}(x_0;x_1) \dots \truc{du}(x_{n-1};x_0)$ and $\displaystyle m^{(2)}(x_0,\dots,x_{n-1}) = \frac{1}{Z(D,U)} \truc{ud}(x_0;x_1) \dots \truc{ud}(x_{n-1};x_0)$ that gives \C~\ref{cond:commutC}.

Conversely, we suppose that \C~\ref{cond:ToomC} and~\ref{cond:commutC} are satisfied. Then, the push forward measure of the $(D,U)$-CHZMC by $\PCA{A}$ is also the $(D,U)$-CHZMC (the computation is an adaptation of that done in the Proof of Theorem~\ref{theo:Toom} in Section~\ref{sec:proofToom} to compute the push forward measure of a HZMC).
\end{proof}

For $\mu$-positive PCA, \C~\ref{cond:ToomC} could be exploited a little more.
\begin{theoreme}\label{theo:mainC}
Let $\mu$ be a $\sigma$-finite measure on $E$. Let $\PCA{A} := (\cyl{n}, E, N, T)$ be a $\mu$-positive PCA. $\PCA{A}$ admits a $\mu$-positive invariant CHZMC if and only if \C~\ref{cond:Belyaev} and

\begin{cond}\label{cond:pfffC}% 
there exists a positive function $\eta \in L^1(\mu)$ solution of: for $\mu$-almost  $x_0,\dots,x_{n-1} \in E$,
\begin{eqnarray*}
\truc{d^\eta u^\eta }(x_0;x_1) \truc{d^\eta u^\eta }(x_1;x_2) \dots \truc{d^\eta u^\eta }(x_{n-1};x_0) = \truc{u^\eta d^\eta }(x_0;x_1) \truc{u^\eta d^\eta }(x_1;x_2) \dots \truc{u^\eta d^\eta }(x_{n-1};x_0)
\end{eqnarray*}
with $d^\eta$ and $u^\eta$ as defined in Eq~(\ref{eq:DUeta}).

In this case, the $(D,U)$-CHZMC has for $\mu$-densities $d^\eta$ and $u^\eta$ as defined in Eq~(\ref{eq:DUeta}).
\end{cond}
\end{theoreme}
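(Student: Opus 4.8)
The plan is to run the argument of Theorem~\ref{theo:main} on the cyclic geometry, using Theorem~\ref{theo:ToomC} in place of Theorem~\ref{theo:Toom}. The key preliminary remark is that for a $\mu$-positive $(D,U)$-CHZMC the induced kernels are themselves $\mu$-positive: the densities $d$ and $u$ are strictly positive $\mu$-almost everywhere, hence so is $\truc{du}$, and by iteration so is every $\truc{(du)^{j}}$, in particular $\truc{(du)^{n-1}}$. Therefore the alternative ``$\truc{(du)^{n-1}}(b;a)=0$'' of \C~\ref{cond:ToomC} never occurs, and for a $\mu$-positive CHZMC \C~\ref{cond:ToomC} is literally \C~\ref{cond:Toom}. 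As in the opening of the proof of Theorem~\ref{theo:main}, for a $\mu$-positive object the quantifiers ``for $\mu$-almost $x\in E$'' and ``for $\mu$-almost $x\in\tilde E=\text{supp}(\mu)$'' coincide, so the $\tilde E$-formulation of \C~\ref{cond:commutC} may be identified with the $E$-formulation appearing in \C~\ref{cond:pfffC}.

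For the direct implication, suppose $\PCA{A}$ admits a $\mu$-positive invariant CHZMC with kernels $(D,U)$. Theorem~\ref{theo:ToomC} together with the remark above gives \C~\ref{cond:Toom} and \C~\ref{cond:commutC}. By Lemma~\ref{lem:first}, \C~\ref{cond:Toom} is equivalent to \C~\ref{cond:Belyaev} (and to \C~\ref{cond:BelyaevGen}), which yields \C~\ref{cond:Belyaev}. By Lemma~\ref{lem:second} there is a $\mu$-positive probability distribution on $(E,\borel{E})$ of $\mu$-density $\eta$ (so $\eta\in L^1(\mu)$, positive $\mu$-almost everywhere) such that the $\mu$-densities of $D$ and $U$ are, $\mu^3$-almost everywhere, the functions $d^\eta$ and $u^\eta$ of Eq~(\ref{eq:DUeta}). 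Substituting $d=d^\eta$, $u=u^\eta$ into \C~\ref{cond:commutC} turns it into exactly the equation of \C~\ref{cond:pfffC}; hence \C~\ref{cond:Belyaev} and \C~\ref{cond:pfffC} hold.

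For the converse, assume \C~\ref{cond:Belyaev} and \C~\ref{cond:pfffC} with witness $\eta$, and set $D=D^\eta$, $U=U^\eta$ with $\mu$-densities given by Eq~(\ref{eq:DUeta}); as in the proof of Theorem~\ref{theo:main}, $\mu$-positivity of $\PCA{A}$ and positivity of $\eta$ make $D^\eta,U^\eta$ $\mu$-positive transition kernels from $E$ to $E$. Exactly as in the converse part of the proof of Theorem~\ref{theo:main}, \C~\ref{cond:Belyaev} --- hence also \C~\ref{cond:BelyaevGen}, by Lemma~\ref{lem:first} --- implies that $d^\eta,u^\eta$ satisfy \C~\ref{cond:Toom}, and therefore \C~\ref{cond:ToomC} (its degenerate branch being vacuous by the first paragraph). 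Moreover \C~\ref{cond:pfffC} is precisely \C~\ref{cond:commutC} for $d^\eta,u^\eta$. Theorem~\ref{theo:ToomC} then gives that the $(D^\eta,U^\eta)$-CHZMC is invariant by $\PCA{A}$, which is the final assertion of the statement.

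The step I expect to be delicate is the well-definedness of the CHZMC in the converse, a point with no counterpart on $\mathbb{N}$ or $\mathbb{Z}$, where the chain is driven by an honest probability kernel and an initial or boundary distribution: to speak of the ``$(D^\eta,U^\eta)$-CHZMC'' one needs $Z(D^\eta,U^\eta)\notin\{0,+\infty\}$. That $Z(D^\eta,U^\eta)>0$ is immediate from positivity of the integrand on $\text{supp}(\mu)^{2n}$; the genuine issue is finiteness, since $Z(D^\eta,U^\eta)$ equals the integral along the diagonal of the $n$-step density of the Markov kernel $D^\eta U^\eta$ --- i.e. $\int_{E^{n}}\truc{d^\eta u^\eta}(x_0;x_1)\cdots\truc{d^\eta u^\eta}(x_{n-1};x_0)\,\d\mu^{n}$ --- which need not be $\mu$-integrable when $\mu$ is infinite. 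For finite alphabets it is automatic (a finite sum of positive numbers), so there the statement reduces to the finite-alphabet result; for general $\mu$ one must either verify finiteness by hand from the $\eta$ furnished by \C~\ref{cond:pfffC} or carry it as a standing hypothesis. Matching the $E$-versus-$\tilde E$ almost-everywhere quantifiers, recalled in the first paragraph, is the only other point needing care, and it is routine.
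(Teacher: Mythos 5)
Your proof follows exactly the paper's route: rewrite \C~\ref{cond:ToomC} as \C~\ref{cond:Toom} using positivity of $\truc{du}$ and $\truc{(du)^{n-1}}$, invoke Lemma~\ref{lem:first} to pass between \C~\ref{cond:Toom} and \C~\ref{cond:Belyaev}, use Lemma~\ref{lem:second} to parametrize $(D,U)$ by $\eta$ via Eq~(\ref{eq:DUeta}), and read \C~\ref{cond:commutC} applied to $d^\eta,u^\eta$ as \C~\ref{cond:pfffC}, in both directions via Theorem~\ref{theo:ToomC} --- this is correct and essentially identical to the paper's argument. The finiteness of $Z(D^\eta,U^\eta)$ that you flag in the converse is a legitimate subtlety, but the paper's own proof leaves it implicit as well, so it is a fair caveat rather than a different approach.
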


\begin{proof}
First of all, when a PCA is $\mu$-positive, \C~\ref{cond:ToomC} can be rewritten, for $\mu$-almost $a,b,c$, $t(a,b;c) = \displaystyle \frac{d(a;c) u(c;b)}{\truc{du}(a;b)}$ because both $\truc{du}(a;b)$ and $\truc{(du)^{n-1}(b;a)}$ are positive. Hence, we use Lemma~\ref{lem:first} to prove that \C~\ref{cond:ToomC} is equivalent to \C~\ref{cond:Belyaev}. Moreover, Lemma~\ref{lem:second} still applies and the state space of possible solutions for $(D,U)$ is parametrized by $\eta$ a function in $L^1(\mu)$. \C~\ref{cond:commutC} applied on $d^\eta$ and $u^\eta$ gives \C~\ref{cond:pfffC}.
\end{proof}

\bibliography{biblio}

\section*{Acknowledgments}
I am very grateful to Jean-Fran\c cois Marckert to encourage me to work on this topic. His comments and suggestions have been a great benefit.
\end{document}